\newcommand\qed{{\hspace*{\fill}$\Box$\vskip12pt plus 1pt}}
\newtheorem{theorem}{Theorem}[section]
\newtheorem{proposition}[theorem]{Proposition}
\newtheorem{corollary}[theorem]{Corollary}
\newtheorem{lemma}[theorem]{Lemma}
\newtheorem{rem}[theorem]{Remark}
\newtheorem{defn}[theorem]{Definition}
\newtheorem{ex}[theorem]{Example}
\newtheorem{alg}[theorem]{Algorithm}
\newenvironment{remark}{\begin{rem}\em}{\end{rem}}
\newenvironment{definition}{\begin{defn}\em}{\end{defn}}
\newenvironment{proof}{{\noindent\bf Proof.\ }}{\qed}
\newcommand{\bone}{{\bf 1}}
\newcommand{\bN}{{\mathbb N}}
\newcommand{\bC}{{\mathbb C}}
\newcommand{\bR}{{\mathbb R}}
\newcommand{\bZ}{{\mathbb Z}}
\newcommand\sF{{\mathcal F}}
\newcommand\sG{{\mathcal G}}
\newcommand\sM{{\mathcal M}}
\newcommand\sP{{\mathcal P}}
\newcommand\sQ{{\mathcal Q}}
\newcommand\sV{{\mathcal V}}
\newcommand\Var{\sV}
\renewcommand{\d}{\partial}
\newcommand{\alphaCert}{{\tt alphaCertified}}
\newcommand{\alphaCertS}{{\tt alphaCertified}~}
\newcommand{\BertiniS}{{\tt Bertini}~}
\newcommand\AlgCertX{{\bf CertifySoln}}
\newcommand{\mytilde}{\raise.17ex\hbox{$\scriptstyle\mathtt{\sim}$}}
\begin{document}





\title{Certifying solutions to square systems of polynomial-exponential equations}

\author{Jonathan D. Hauenstein\thanks{Department of Applied and Computational Mathematics and Statistics, University of Notre Dame, Notre Dame, IN 46556 (hauenstein@nd.edu,
\url{www.nd.edu/\~jhauenst}).  This author was partially supported by 
NSF grants DMS-0915211, DMS-1114336, and ACI-1460032.}
\and Viktor Levandovskyy\thanks{Lehrstuhl D f\"ur Mathematik, RWTH Aachen, Templergraben 64, 52062 Aachen Germany
(Viktor.Levandovskyy@math.rwth-aachen.de, \url{www.math.rwth-aachen.de/\~Viktor.Levandovskyy}).}}

\date{June 30, 2015}

\maketitle

\begin{abstract}

\noindent Smale's $\alpha$-theory certifies that Newton iterations will converge 
quadratically to a solution of a square system of analytic functions based on the Newton residual and
all higher order derivatives at the given point.  
Shub and Smale presented a bound for the higher order derivatives of a 
system of polynomial equations based in part on the degrees of the equations.
For a given system of polynomial-exponential equations, we consider
a related system of polynomial-exponential equations and provide a
bound on the higher order derivatives of this related system.
This bound yields a complete algorithm for certifying solutions to 
polynomial-exponential systems, which is implemented in \alphaCert.
Examples are presented to demonstrate this certification algorithm.

\noindent {\bf Key words and phrases.}
certified solutions, alpha theory, polynomial system,
polynomial-exponential systems, numerical algebraic geometry, alphaCertified


\end{abstract}
\normalsize

\section{Introduction}\label{Sec:Intro}

A map $f:\bC^n\rightarrow\bC^n$ is called a square system of
polynomial-exponential functions if $f$ is polynomial in both the variables
$x_1,\dots,x_n$ and finitely many exponentials
of the form $e^{\beta x_i}$ where $\beta\in\bC$.
That is, there exists a polynomial system $P:\bC^{n+m}\rightarrow\bC^n$,
analytic functions $g_1,\dots,g_m:\bC\rightarrow\bC$,
and integers $\sigma_1,\dots,\sigma_m\in\{1,\dots,n\}$ such that
\[
f(x_1,\dots,x_n) = P(x_1,\dots,x_n,g_1(x_{\sigma_1}),\dots,g_m(x_{\sigma_m}))
\]
where each $g_i$ satisfies some linear homogeneous partial differential equation (PDE)
with complex coefficients.  In particular, for each $i = 1,\dots,m$,
there exists a positive integer $r_i$ and a linear function $\ell_i:\bC^{r_i+1}\rightarrow\bC$
such that $\ell_i(g_i,g_i',\dots,g_i^{(r_i)}) = 0$.

Consider the square polynomial-exponential system $\sF:\bC^{n+m}\rightarrow\bC^{n+m}$ where
\begin{equation}\label{Eq:ReductionF}
\sF(x_1,\dots,x_n,y_1,\dots,y_m) = \left[\begin{array}{c}
P(x_1,\dots,x_n,y_1,\dots,y_m) \\
y_1 - g_1(x_{\sigma_1}) \\ \vdots \\ y_m - g_m(x_{\sigma_m}) \end{array}\right].
\end{equation}
Since the projection map $(x,y)\mapsto x$ defines a bijection between the solutions
of $\sF(x,y) = 0$ and $f(x) = 0$, we will only
consider certifying solutions to square systems of polynomial-exponential equations
of the form $\sF(x,y) = 0$.

For a square system $g:\bC^n\rightarrow\bC^n$ of analytic functions,
a point $x\in\bC^n$ is an {\em approximate solution} of $g = 0$
if Newton iterations applied to $x$ with respect to $g$
quadratically converge immediately to a solution of $g = 0$.
The certificate returned by our approach that a point is an
approximation solution of $\sF = 0$ is an $\alpha$-theoretic certificate.
In short, $\alpha$-theory, which started for systems of analytic equations in \cite{S86},
provides a rigorous mathematical foundation for the fact that if the
Newton residual at the point is small and the higher order derivatives
at the point are controlled, then the point is an approximate solution.
For polynomial systems, by exploiting the fact that there are only finitely
many nonzero derivatives, Shub and Smale \cite{SS93} provide a bound on all
of the higher order derivatives.
For polynomial-exponential systems, our approach uses the structure
of $\sF$ together with the linear functions $\ell_i$
to bound the higher~order~derivatives.

Systems of polynomial-exponential functions naturally arise in many applications
including engineering, mathematical physics, and control theory, to name a few.
On the other hand, such functions are typical solutions to systems of linear partial
differential equations with constant coefficients.
Systems, including ubiquitous functions like $\sin(x)$, $\cos(x)$, $\sinh(x)$, and $\cosh(x)$, can be
equivalently reformulated as systems of polynomial-exponential functions, since
these functions can be expressed as polynomials involving $e^{\beta x}$ for suitable $\beta\in\bC$.
Since computing all solutions to such systems is often nontrivial,
methods for approximating and certifying some solutions for general systems
is very important, especially in the aforementioned applications.

In the rest of this section, we introduce the needed concepts from
$\alpha$-theory.  Section~\ref{Sec:CertifySolns} formulates
the bounds for the higher order derivatives of polynomial-exponential
systems and presents a certification algorithm for polynomial-exponential systems.
In Section~\ref{Sec:ApproxSolns}, we discuss methods for 
generating numerical approximations to solutions of polynomial-exponential systems.
Section~\ref{Sec:Examples} describes the implementation of the certification
algorithm in \alphaCertS as well as demonstrating the algorithms on a collection of examples.
Appendix~\ref{Sec:Appendix} demonstrates
the input, command-line execution, and 
output of \alphaCertS for a polynomial-exponential system
from Section~\ref{Sec:RRdyad}.  Files for all of the examples are
available at \url{www.nd.edu/~jhauenst/PolyExp}.

\subsection{\texorpdfstring{Smale's $\alpha$-theory}{Smale's alpha-theory}}\label{Sec:alphaTheory}

We provide a summary of the elements of $\alpha$-theory used in
the remainder of the article as well as in \alphaCert.
Hence, this section closely follows \cite[\S~1]{alphaCertifiedPaper}
expect ``polynomial'' is replaced by ``analytic.''
We focus on {\em square} systems, which are systems with
the same number of variables and functions,
with more details provided in \cite{BCSS}.

Let $f:\bC^n\rightarrow\bC^n$ be a system of analytic functions
with zeros $\Var(f) = \{\xi\in\bC^n~|~f(\xi) = 0\}$
and $Df(x)$ be the Jacobian matrix of $f$ at $x$.
For a point $x\in\bC^n$, the point $N_f(x)$ is called
the {\em Newton iteration of $f$ at $x$} where
the map $N_f:\bC^n\rightarrow\bC^n$ is defined by
\[
N_f(x) = \left\{\begin{array}{ll} x - Df(x)^{-1}f(x) & \hbox{if $Df(x)$ is invertible,} \\
 x & \hbox{otherwise.} \end{array}\right.
\]
For $k\in\bN$, let $N_f^k(x)$ be the $k^{th}$ Newton iteration of $f$ at $x$, that is,
\[
 N_f^k(x) = \underbrace{N_f \circ \cdots \circ N_f}_{k \hbox{\scriptsize ~times}}(x).
\]
The following defines an approximate solution of $f$ to be a point which converges quadratically
in the standard Euclidean norm on $\bC^n$ to a point in $\Var(f)$.

\begin{definition}\label{Def:ApproxSoln}
Let $f:\bC^n\rightarrow\bC^n$ be an analytic system.  A point $x\in\bC^n$
is an {\em approximate solution} of $f = 0$ with {\em associated solution} $\xi\in\Var(f)$
if, for every $k\in\bN$,
\[
\|N_f^k(x) - \xi\| \leq \left(\frac{1}{2}\right)^{2^k-1} \|x - \xi\|.
\]
\end{definition}

Clearly, every solution of $f = 0$ is an approximate solution of $f = 0$.
Additionally, when $Df(x)$ is not invertible, then a point $x$ is
an approximate solution of $f = 0$ if and only if $x\in\Var(f)$.
When $Df(x)$ is invertible, the results of $\alpha$-theory
provide a certificate that $x$ is an approximate solution of $f = 0$.
This certificate is based on $\alpha(f,x)$, $\beta(f,x)$, and $\gamma(f,x)$, namely
\begin{eqnarray}
\alpha(f,x) &=& \beta(f,x)\cdot\gamma(f,x), \nonumber\\
\beta(f,x) &=& \|x - N_f(x)\| = \|Df(x)^{-1} f(x)\|, \hbox{~and} \nonumber \\
\gamma(f,x) &=& \displaystyle\sup_{k\geq2} \left\|\frac{Df(x)^{-1} D^k f(x)}{k!}\right\|^{\frac{1}{k-1}} \label{Eq:Gamma}
\end{eqnarray}
where $D^k f(x)$ is the $k^{th}$ derivative of $f$ (see \cite[Chap.~5]{L83}).

When $Df(x)$ is not invertible, we define $\beta(f,x)$ as zero and $\gamma(f,x)$ as infinity.
The constant $\alpha(f,x)$ is then the indeterminate form $0\cdot\infty$
which is defined based on the value of $f(x)$.
If $f(x) = 0$, then $\alpha(f,x)$ is defined as zero, otherwise
$\alpha(f,x)$ is defined as infinity.

The following lemma, which is a conclusion of Theorem 2 of \cite[Chap.~8]{BCSS},
shows that, when~$x$ is an approximate solution of $f = 0$,
the distance between $x$ and its associated solution can be bounded in terms of $\beta(f,x)$.
Moreover, this bound can be used to produce a certificate that two approximate solutions have
distinct associated solutions.

\begin{lemma}\label{Lemma:DistBound}
Let $f:\bC^n\rightarrow\bC^n$ be an analytic system.
If $x\in\bC^n$ is an approximate solution of $f = 0$
with associated solution $\xi$, then
\[
\|x - \xi\| \leq 2\beta(f,x).
\]
Moreover, if $x_1,x_2\in\bC^n$ are approximate solutions of
$f = 0$ with associated solutions $\xi_1,\xi_2$, respectively, then
$\xi_1 \neq \xi_2$ provided that
\[
\|x_1 - x_2\| > 2(\beta(f,x_1) + \beta(f,x_2)).
\]
\end{lemma}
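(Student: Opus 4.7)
The plan is to derive both parts of the lemma directly from Definition~\ref{Def:ApproxSoln} together with the triangle inequality; no appeal to the constants $\alpha(f,x)$ or $\gamma(f,x)$ is needed, so the quadratic convergence rate of Newton's method is used only through the $k=1$ instance of the defining inequality.

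For the first claim, I would instantiate the approximate solution inequality at $k=1$ to obtain $\|N_f(x) - \xi\| \leq \tfrac{1}{2}\|x - \xi\|$. The triangle inequality then gives
\[
\|x - \xi\| \;\leq\; \|x - N_f(x)\| + \|N_f(x) - \xi\| \;\leq\; \beta(f,x) + \tfrac{1}{2}\|x - \xi\|,
\]
using the identity $\|x - N_f(x)\| = \beta(f,x)$ from the definition of $\beta$. Rearranging yields the desired bound $\|x - \xi\| \leq 2\beta(f,x)$. One should briefly note that if $Df(x)$ fails to be invertible, then by the convention adopted in the paper the only way $x$ can be an approximate solution is if $x \in \Var(f)$, in which case $\xi = x$ and both sides of the bound vanish, so this edge case is immediate.

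For the second claim, I would argue by contradiction. Suppose $\xi_1 = \xi_2$, call their common value $\xi$. Applying the first part of the lemma to each of $x_1$ and $x_2$ with associated solution $\xi$ gives $\|x_i - \xi\| \leq 2\beta(f,x_i)$ for $i = 1,2$. The triangle inequality then yields
\[
\|x_1 - x_2\| \;\leq\; \|x_1 - \xi\| + \|\xi - x_2\| \;\leq\; 2(\beta(f,x_1) + \beta(f,x_2)),
\]
which contradicts the hypothesis $\|x_1 - x_2\| > 2(\beta(f,x_1) + \beta(f,x_2))$. Hence $\xi_1 \neq \xi_2$.

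There is no real obstacle in this proof: the entire content is packaged into the definition of an approximate solution, and both parts are short triangle-inequality arguments. The only subtlety worth flagging is that the factor $2$ in the bound $\|x - \xi\| \leq 2\beta(f,x)$ is sharp in the sense that it arises from the geometric ratio $1/2$ at $k=1$ in Definition~\ref{Def:ApproxSoln}; the finer quadratic rate at $k\geq 2$ plays no role here.
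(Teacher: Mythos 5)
Your proof is correct and follows essentially the same route as the paper: the first bound comes from the $k=1$ case of Definition~\ref{Def:ApproxSoln} plus the triangle inequality, and the second is the same three-term triangle inequality the paper uses, merely phrased as a contradiction rather than as the direct estimate $\|x_1 - x_2\| \leq 2(\beta(f,x_1)+\beta(f,x_2)) + \|\xi_1 - \xi_2\|$. Your extra remarks on the non-invertible Jacobian case and the sharpness of the factor $2$ are accurate but not needed.
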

\begin{proof}
Both results immediately follow from the triangle inequality.  In particular,
\[
\|x-\xi\| \leq \|x - N_f(x)\| + \|N_f(x) - \xi\| \leq \beta(f,x) + \frac{1}{2} \|x - \xi\|
\]
yields $\|x - \xi\| \leq 2 \beta(f,x)$.  Additionally,
\[
\|x_1 - x_2\| \leq \|x_1 - \xi_1\| + \|\xi_1 - \xi_2\| + \|\xi_2 - x_2\| \leq 2(\beta(f,x_1) + \beta(f,x_2)) + \|\xi_1 - \xi_2\|
\]
yields that $\xi_1 \neq \xi_2$ when $\|x_1 - x_2\| > 2(\beta(f,x_1) + \beta(f,x_2))$.
\end{proof}

The following theorem, called an $\alpha$-theorem,
is a version of Theorem 2 of \cite[Chap.~8]{BCSS} which
shows that the value of $\alpha(f,x)$ can be used to
produce a certificate that $x$ is an approximate solution of $f = 0$.

\begin{theorem}\label{Thm:AlphaThm}
If $f:\bC^n\rightarrow\bC^n$ is an analytic system and $x\in\bC^n$ with
\[
\alpha(f,x) < \frac{13 - 3\sqrt{17}}{4} \approx 0.157671,
\]
then $x$ is an approximate solution of $f = 0$.
\end{theorem}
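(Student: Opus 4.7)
The plan is to follow the standard $\alpha$-theory route of \cite[Chap.~8]{BCSS}, adapted to the analytic setting, which differs from the polynomial case only cosmetically because the bounds depend solely on $Df(x)^{-1}D^kf(x)/k!$ and the Taylor expansion of $f$ about $x$. Since $f$ is analytic at $x$, for any $y$ with $\gamma(f,x)\|y-x\|$ smaller than the radius of convergence we may expand
\[
Df(x)^{-1}f(y) = Df(x)^{-1}f(x) + (y-x) + \sum_{k\geq 2} \frac{Df(x)^{-1}D^kf(x)}{k!}(y-x)^k,
\]
and bound the tail via the auxiliary series $\psi(u) = 1 - 4u + 2u^2$, exactly as in the polynomial setting.

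The first step is to prove two ``one-step'' estimates that compare the invariants at $x$ to those at $N_f(x)$. The key quantity is $u = \gamma(f,x)\,\|y-x\|$, for which one establishes operator-norm bounds of the form
\[
\bigl\|Df(x)^{-1}Df(y) - I\bigr\| \leq \frac{1}{(1-u)^2} - 1,
\qquad
\bigl\|Df(y)^{-1}Df(x)\bigr\| \leq \frac{(1-u)^2}{\psi(u)},
\]
valid on the interval where $\psi(u) > 0$, i.e. $u < 1 - \sqrt{2}/2$. From these one derives the invariance estimates
\[
\beta(f,N_f(x)) \leq \frac{u}{\psi(u)}\,\beta(f,x), \qquad
\gamma(f,N_f(x)) \leq \frac{\gamma(f,x)}{(1-u)\,\psi(u)},
\]
and consequently a recursion $\alpha(f,N_f(x)) \leq \Phi(\alpha(f,x))$ for an explicit rational function $\Phi$.

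The second step is to isolate the critical threshold. One checks that $\Phi$ admits a fixed-point analysis whose relevant quantity is the largest $\alpha_0$ such that $\Phi(\alpha_0) \leq \alpha_0/2$ (or any ratio guaranteeing super-geometric contraction). Solving the resulting quadratic equation yields precisely $\alpha_0 = (13 - 3\sqrt{17})/4$; this is where the numerical value appears, and verifying that this exact constant falls out of the algebra is the main bookkeeping obstacle, though it is a routine computation once $\Phi$ is written down.

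The final step is induction. Assuming $\alpha(f,x) < \alpha_0$, the one-step estimates applied repeatedly give $\alpha(f,N_f^k(x)) < \alpha_0$ for all $k$ and, more importantly, $\beta(f,N_f^k(x)) \leq (1/2)^{2^k - 1}\beta(f,x)$. Telescoping the inequalities $\|N_f^{k+1}(x) - N_f^k(x)\| = \beta(f,N_f^k(x))$ shows that $\{N_f^k(x)\}$ is Cauchy and converges to some $\xi \in \bC^n$. Continuity of $f$ and $Df$ (which remains invertible along the iteration since $u<1-\sqrt{2}/2$) forces $f(\xi)=0$, and applying Lemma~\ref{Lemma:DistBound}-style triangle inequality comparisons at each iterate yields the quadratic bound $\|N_f^k(x) - \xi\| \leq (1/2)^{2^k-1}\|x-\xi\|$ required by Definition~\ref{Def:ApproxSoln}.
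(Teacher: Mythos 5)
First, a point of calibration: the paper does not prove Theorem~\ref{Thm:AlphaThm} at all --- it is imported verbatim as ``a version of Theorem 2 of \cite[Chap.~8]{BCSS}'' with no argument supplied --- so your proposal is really being measured against the proof in \cite{BCSS} that you are reconstructing. Your skeleton matches that proof: the Taylor expansion of $Df(x)^{-1}f$ about $x$, the auxiliary function $\psi(u)=1-4u+2u^2$ with its positivity threshold $u<1-\sqrt{2}/2$, the two operator-norm bounds on $Df(x)^{-1}Df(y)$ and its inverse, and one-step invariance estimates for $\beta$, $\gamma$, $\alpha$ at $N_f(x)$ are exactly the standard ingredients, and you are right that the passage from polynomial to analytic $f$ is cosmetic because everything is phrased through $\gamma(f,x)$.

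The gap is in your second and third steps, which is where the theorem actually lives. The constant does not come from solving $\Phi(\alpha_0)=\alpha_0/2$ for the recursion your one-step estimates produce: those estimates give $\alpha(f,N_f(x))\leq\alpha^2/\psi(\alpha)^2$ (or the slightly weaker $\alpha^2/((1-\alpha)\psi(\alpha)^2)$ with your form of the $\beta$-bound), and at $\alpha_0=(13-3\sqrt{17})/4\approx 0.157671$ one has $\psi(\alpha_0)\approx 0.419$, hence $\alpha_0/\psi(\alpha_0)^2\approx 0.898$ --- nowhere near $1/2$ --- so the quadratic you propose to solve does not have $(13-3\sqrt{17})/4$ as a root, and naively telescoping these one-step bounds does not deliver the $(1/2)^{2^k-1}$ rate. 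Moreover, Definition~\ref{Def:ApproxSoln} demands contraction of $\|N_f^k(x)-\xi\|$ relative to $\|x-\xi\|$, not of the residuals $\beta(f,N_f^k(x))$ relative to $\beta(f,x)$, and converting between the two at the critical threshold is not a routine triangle-inequality step. The argument in \cite{BCSS} instead uses the one-step estimates only to produce the limit zero $\xi$ and to verify $\gamma(f,\xi)\|x-\xi\|\leq(3-\sqrt{7})/2$ (the root of $u/\psi(u)=1/2$), then invokes the $\gamma$-theorem at $\xi$ to obtain the doubly exponential decay of $\|N_f^k(x)-\xi\|$; the value $(13-3\sqrt{17})/4$, i.e.\ the small root of $2u^2-13u+2$, is what that two-stage bookkeeping yields. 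As written, your argument would not certify the stated constant, which is the entire content of the theorem; you should either carry out the BCSS two-stage computation or, as the paper does, simply cite it.
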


The following theorem, called a robust $\alpha$-theorem that 
is a version of Theorem~4 and Remark~6 of \cite[Chap.~8]{BCSS},
shows that the value of $\alpha(f,x)$ and $\gamma(f,x)$
can be used to produce a certificate that $x$ and another point $y$
have the same associated solution.

\begin{theorem}\label{Thm:RobustAlphaThm}
Let $f:\bC^n\rightarrow\bC^n$ be an analytic system and $x\in\bC^n$ with
$\alpha(f,x) < 0.03$.  If $y\in\bC^n$ such that
\[
\|x - y\| < \frac{1}{20\gamma(f,x)},
\]
then $x$ and $y$ are both approximate solutions of $f = 0$
with the same associated solution.
\end{theorem}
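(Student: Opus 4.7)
The plan is to apply the ordinary $\alpha$-theorem (Theorem~\ref{Thm:AlphaThm}) at both $x$ and $y$ and then use Lemma~\ref{Lemma:DistBound} to identify their associated solutions. The point of the strong numerical hypothesis $\alpha(f,x)<0.03$, which is well below the threshold $(13-3\sqrt{17})/4\approx 0.158$ of Theorem~\ref{Thm:AlphaThm}, is to leave enough slack that the hypothesis persists after perturbing from $x$ to a nearby $y$. Applying Theorem~\ref{Thm:AlphaThm} directly, $x$ is an approximate solution of $f=0$ with some associated solution $\xi$, and Lemma~\ref{Lemma:DistBound} gives
\[
\|x-\xi\|\;\le\;2\beta(f,x)\;=\;\frac{2\alpha(f,x)}{\gamma(f,x)}\;<\;\frac{0.06}{\gamma(f,x)}.
\]

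The technical core is a perturbation estimate, essentially contained in \cite[Chap.~8]{BCSS}, which bounds $\beta(f,y)$ and $\gamma(f,y)$ in terms of $\beta(f,x)$, $\gamma(f,x)$, and $u:=\|x-y\|\gamma(f,x)$. One expands
\[
D^{k}f(y)\;=\;\sum_{j\ge 0}\frac{D^{k+j}f(x)}{j!}\,(y-x)^{j},
\]
controls $Df(y)^{-1}$ by a Neumann-series argument against $Df(x)$, and sums the resulting geometric-type series. The outputs have the shape $\gamma(f,y)\le \gamma(f,x)/\bigl((1-u)\psi(u)\bigr)$ together with a matching bound for $\beta(f,y)$, where $\psi(u)=2u^{2}-4u+1$. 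Inserting $u<1/20$ and $\alpha(f,x)<0.03$ into these bounds produces $\alpha(f,y)<(13-3\sqrt{17})/4$, so a second application of Theorem~\ref{Thm:AlphaThm} gives an associated solution $\eta$ for $y$.

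Finally, Lemma~\ref{Lemma:DistBound} yields
\[
\|\xi-\eta\|\;\le\;2\beta(f,x)+\|x-y\|+2\beta(f,y),
\]
which the perturbation estimates of the previous step force to be strictly smaller than the radius of the basin of quadratic Newton convergence around $\xi$. Since Newton iteration from any point in that basin converges to a unique zero of $f$, and $\eta$ is itself a zero, one concludes $\eta=\xi$, as required. The principal obstacle is the perturbation estimate for $\gamma(f,y)$: it is a delicate but standard analytic calculation, and its numerical constants are precisely what pin down the thresholds $0.03$ and $1/20$ appearing in the statement.
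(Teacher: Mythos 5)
Note first that the paper does not prove this theorem: it is imported verbatim as ``a version of Theorem 4 and Remark 6 of \cite[Chap.~8]{BCSS}'', so the only thing to compare your sketch against is the argument those citations point to. Your outline correctly identifies the machinery — the Taylor expansion of $D^k f$ at $y$ about $x$, the Neumann-series control of $Df(y)^{-1}$, and the resulting perturbation bounds with $\psi(u)=2u^2-4u+1$ — and the first two steps (obtaining $\xi$ with $\|x-\xi\|\le 2\beta(f,x)<0.06/\gamma(f,x)$, and verifying $\alpha(f,y)$ stays below the threshold) do check out numerically.

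The genuine gap is in your endgame. You introduce a second associated zero $\eta$ for $y$ and try to force $\eta=\xi$ from
\[
\|\xi-\eta\|\le 2\beta(f,x)+\|x-y\|+2\beta(f,y).
\]
Inserting your own perturbation estimates, $2\beta(f,y)\approx 0.19/\gamma(f,x)$, so the right-hand side is roughly $0.3/\gamma(f,x)$. The only quantified ``radius of the basin'' available is the $\gamma$-theorem ball of radius $(3-\sqrt7)/(2\gamma(f,\xi))$, and after converting $\gamma(f,\xi)\le\gamma(f,x)/((1-u)\psi(u))$ with $u\le 0.06$ this radius is only about $0.13/\gamma(f,x)$; known separation bounds for distinct zeros are of the same order. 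So your distance bound does \emph{not} land inside any region where uniqueness is established, and the claimed forcing does not follow. The repair — and the route the paper's citation actually intends — is to never introduce $\eta$: bound $\|y-\xi\|\le\|y-x\|+2\beta(f,x)<(0.05+0.06)/\gamma(f,x)\approx 0.153/\gamma(f,\xi)<(3-\sqrt7)/(2\gamma(f,\xi))$ directly, then apply the $\gamma$-theorem (Theorem~4 of \cite[Chap.~8]{BCSS}) at $\xi$, which certifies in one stroke that both $x$ and $y$ are approximate solutions with the \emph{same} associated solution $\xi$. The detour through $\beta(f,y)$ is lossy precisely because $2\beta(f,y)$ is much larger than $\|y-\xi\|$ itself.
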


Let $\pi_\bR:\bC^n\rightarrow\bR^n$ be the real projection map
defined by $\pi_\bR(x) = \displaystyle\frac{x + \overline{x}}{2}$
where $\overline{x}$ is the complex conjugate of $x$.
If $f$ is an analytic system such that $N_f(\overline{x}) = \overline{N_f(x)}$
for all $x$ such that $Df(x)$ is invertible, then $N_f$ defines a real map, i.e., $N_f(\bR^n)\subset\bR^n$.
In particular, if $x$ is an approximate solution of $f = 0$ with associated solution $\xi$,
then $\overline{x}$ is also an approximate solution of $f = 0$ with associated solution $\overline{\xi}$
and $\beta(f,x) = \beta(f,\overline{x})$.  The following proposition,
which is a summary of the approach in \cite[\S~2.1]{alphaCertifiedPaper},
can be used to determine if the associated solution of an approximation solution is real.

\begin{proposition}\label{Prop:Real}
Let $f:\bC^n\rightarrow\bC^n$ be a polynomial system such that $N_f(\overline{x}) = \overline{N_f(x)}$
for all $x\in\bC^n$ such that $Df(x)$ is invertible.  Let $x\in\bC^n$ be an approximate solution of $f = 0$ with
associated solution $\xi$.
\begin{enumerate}
\item\label{Item:Real1} If $\|x - \pi_\bR(x)\| > 2\beta(f,x)$, then $\xi\notin\bR^n$.
\item\label{Item:Real2} If $\alpha(f,x) < 0.03$ and $\displaystyle\|x-\pi_\bR(x)\| < \frac{1}{20\gamma(f,x)}$, then $\xi\in\bR^n$.
\end{enumerate}
\end{proposition}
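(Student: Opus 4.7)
The plan is to derive both items directly from Lemma~\ref{Lemma:DistBound} and Theorem~\ref{Thm:RobustAlphaThm}, using only two ingredients beyond those results: the identity $\|x - \pi_\bR(x)\| = \tfrac{1}{2}\|x - \overline{x}\|$, which is immediate from the definition of $\pi_\bR$, and the consequence of the conjugation-invariance hypothesis $N_f(\overline{x}) = \overline{N_f(x)}$ already noted in the surrounding text: whenever $x$ is an approximate solution with associated solution $\xi$, the point $\overline{x}$ is an approximate solution with associated solution $\overline{\xi}$ and $\beta(f,\overline{x}) = \beta(f,x)$.

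For Item~\ref{Item:Real1}, I would apply the second half of Lemma~\ref{Lemma:DistBound} to the pair $(x_1,x_2) = (x,\overline{x})$. Since $\beta(f,\overline{x}) = \beta(f,x)$, the hypothesis $\|x - \pi_\bR(x)\| > 2\beta(f,x)$ rewrites, via $\|x-\overline{x}\| = 2\|x - \pi_\bR(x)\|$, as $\|x - \overline{x}\| > 2(\beta(f,x) + \beta(f,\overline{x}))$. The lemma then forces the associated solutions of $x$ and $\overline{x}$ to be distinct, i.e.\ $\xi \neq \overline{\xi}$, so $\xi \notin \bR^n$.

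For Item~\ref{Item:Real2}, I would invoke Theorem~\ref{Thm:RobustAlphaThm} with $y = \pi_\bR(x)$. The two hypotheses $\alpha(f,x) < 0.03$ and $\|x - \pi_\bR(x)\| < 1/(20\gamma(f,x))$ are exactly those of the robust $\alpha$-theorem, so $\pi_\bR(x)$ is an approximate solution of $f = 0$ with the same associated solution $\xi$ as $x$. Since $\pi_\bR(x)$ is real and $N_f$ commutes with complex conjugation, the entire Newton sequence $N_f^k(\pi_\bR(x))$ lies in $\bR^n$; passing to the limit (guaranteed by Definition~\ref{Def:ApproxSoln}) yields $\xi \in \bR^n$. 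Equivalently, $\overline{\pi_\bR(x)} = \pi_\bR(x)$ is also an approximate solution with associated solution $\overline{\xi}$, and uniqueness of the Newton limit gives $\xi = \overline{\xi}$.

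There is no real obstacle here, since the argument simply packages the two preceding results using $\pi_\bR$ and conjugation symmetry. The only points worth being careful about are the factor of two relating $\|x - \overline{x}\|$ to $\|x - \pi_\bR(x)\|$ (so that the hypothesis of Item~\ref{Item:Real1} matches the form required by Lemma~\ref{Lemma:DistBound}) and the observation that the associated solution of an approximate solution is uniquely determined as the limit of its Newton iterates, which is what legitimizes concluding $\xi = \overline{\xi}$ in Item~\ref{Item:Real2}.
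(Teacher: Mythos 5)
Your proof is correct and follows essentially the same route as the paper: Item~\ref{Item:Real1} via Lemma~\ref{Lemma:DistBound} applied to the pair $(x,\overline{x})$ using $\|x-\overline{x}\| = 2\|x-\pi_\bR(x)\|$ and $\beta(f,x)=\beta(f,\overline{x})$, and Item~\ref{Item:Real2} via Theorem~\ref{Thm:RobustAlphaThm} with $y=\pi_\bR(x)$ together with $N_f(\bR^n)\subset\bR^n$. Your write-up just spells out the details the paper leaves implicit.
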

\begin{proof}
Since $\|x - \overline{x}\| = 2\|x - \pi_\bR(x)\|$ and $\beta(f,x) = \beta(f,\overline{x})$,
Item~\ref{Item:Real1} follows by concluding $\xi \neq \overline{\xi}$ using
Lemma~\ref{Lemma:DistBound}.  Item~\ref{Item:Real2} follows from Theorem~\ref{Thm:RobustAlphaThm}
together with $\pi_\bR(x)\in\bR^n$ and $N_f(\bR^n)\subset\bR^n$.
\end{proof}

\subsection{Bounding higher order derivatives}\label{Sec:HigherDerivs}

The constant $\gamma(f,x)$ defined in (\ref{Eq:Gamma}) yields
information regarding the higher order derivatives of $f$ evaluated
at $x$.  Even though, for polynomial systems, $\gamma(f,x)$
is actually a maximum of finitely many values, it is often computationally
difficult to compute exactly.  However, in the polynomial case,
it can be bounded above based in part on the degrees of the polynomials \cite{SS93}.
Due to the nature of polynomial-exponential systems, this
bound will be used in our algorithm presented in Section~\ref{Sec:CertifySolns}
for certifying solutions to polynomial-exponential systems.

Let $g:\bC^n\rightarrow\bC$ be a polynomial of degree $d$ where
$g(x) = \sum_{|\rho|\leq d} a_\rho x^\rho$ and
\[
\|g\|^2 = \frac{1}{d!} \sum_{|\rho|\leq d} \rho!\cdot(d - |\rho|)!\cdot|a_\rho|^2
\]
is the standard unitarily invariant norm on the homogenization of $g$.
For a polynomial system $f:\bC^n\rightarrow\bC^n$
with $f(x) = [f_1(x),\dots,f_n(x)]^T$, we have
\[
\|f\|^2 = \sum_{i=1}^n \|f_i\|^2.
\]
For a point $x\in\bC^n$, define $\|x\|_1^2 = 1 + \|x\|^2 = 1 + \sum_{i=1}^n |x_i|^2$.

The following is an affine version of Propositions~1 and~3 from \cite{SS93}.

\begin{proposition}\label{Prop:BoundEval}
If $g:\bC^n\rightarrow\bC$ is a polynomial of degree $d$,
then, for all $x\in\bC^n$ and $k\geq 1$,
\[
|g(x)| \leq \|g\|\cdot\|x\|_1^d \hbox{~~and~~} \|D^kg(x)\| \leq
d\cdot(d-1)\cdots(d-k+1)\cdot\|g\|\cdot\|x\|_1^{d-k}.
\]
\end{proposition}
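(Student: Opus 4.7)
The plan is to reduce the claim to the corresponding bounds for \emph{homogeneous} polynomials via homogenization. Set
$$G(y_0,y_1,\dots,y_n) \;=\; \sum_{|\rho|\leq d} a_\rho\, y_0^{d-|\rho|} y_1^{\rho_1}\cdots y_n^{\rho_n},$$
a homogeneous polynomial of degree $d$ in $n+1$ variables. Reindexing by $\tilde\rho = (d-|\rho|,\rho_1,\dots,\rho_n)$, so that $|\tilde\rho|=d$ and $\tilde\rho! = (d-|\rho|)!\,\rho!$, the norm defined in the text is exactly the Bombieri norm $\|G\|^2 = \sum_{|\tilde\rho|=d} \tilde\rho!\,|c_{\tilde\rho}|^2/d!$, so $\|G\|=\|g\|$. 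Moreover, setting $y = (1,x)\in\bC^{n+1}$ yields $G(y)=g(x)$ and $\|y\|^2 = 1+\|x\|^2 = \|x\|_1^2$, so it suffices to prove the homogeneous analogues and then specialize.

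For the evaluation bound $|G(y)|\leq \|G\|\cdot\|y\|^d$, I would apply Cauchy--Schwarz term-by-term and then the multinomial theorem:
$$|G(y)|^2 \;\leq\; \Big(\sum_{|\tilde\rho|=d}\tfrac{\tilde\rho!}{d!}\,|c_{\tilde\rho}|^2\Big)\Big(\sum_{|\tilde\rho|=d}\tfrac{d!}{\tilde\rho!}\,|y^{\tilde\rho}|^2\Big) \;=\; \|G\|^2(|y_0|^2+\cdots+|y_n|^2)^d \;=\; \|G\|^2\|y\|^{2d}.$$
Specializing $y=(1,x)$ immediately gives the first inequality $|g(x)|\leq \|g\|\cdot\|x\|_1^d$.

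For the derivative bound I would pass through the polarization. Let $\tilde G\colon(\bC^{n+1})^d\to\bC$ be the unique symmetric $d$-linear form with $\tilde G(y,\dots,y)=G(y)$. The key step is to establish the Bombieri-type multilinear inequality
$$|\tilde G(y_1,\dots,y_d)| \;\leq\; \|G\|\cdot\|y_1\|\cdots\|y_d\|,$$
which follows by expanding each $y_i$ in the standard basis and applying Cauchy--Schwarz once to the resulting polarized sum, using that polarization divides each monomial coefficient by a multinomial. Combined with the identity
$$D^k G(y)(v_1,\dots,v_k) \;=\; \tfrac{d!}{(d-k)!}\,\tilde G(\underbrace{y,\dots,y}_{d-k},v_1,\dots,v_k),$$
this yields $\|D^k G(y)\|\leq d(d-1)\cdots(d-k+1)\,\|G\|\,\|y\|^{d-k}$.

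To transport the bound back to $g$, I would observe that the derivatives of $g$ at $x$ in directions $v_1,\dots,v_k\in\bC^n$ coincide with the derivatives of $G$ at $(1,x)$ in the lifted directions $(0,v_i)\in\bC^{n+1}$. Since lifting preserves Euclidean norms, this gives $\|D^k g(x)\|\leq \|D^k G(1,x)\|$, and $\|(1,x)\|=\|x\|_1$ then delivers the stated bound. The main obstacle is the multilinear estimate on $|\tilde G|$: although it is a classical Bombieri inequality, the combinatorial bookkeeping of the polarization coefficients has to be done carefully. This is precisely the homogeneous content of Propositions~1 and~3 of \cite{SS93}; the only new ingredients here are the passage to the homogenization and the observation that $\|(1,x)\|=\|x\|_1$.
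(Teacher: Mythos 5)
Your argument is correct in outline, but it is worth noting that the paper does not actually prove this proposition: it is stated without proof as ``an affine version of Propositions~1 and~3 from \cite{SS93},'' so the paper's entire proof is the citation. What you have done is reconstruct the content of that citation --- homogenize $g$ to $G$, check that the norm in the text is exactly the Bombieri--Weyl norm of $G$ (your reindexing $\tilde\rho=(d-|\rho|,\rho)$ with $\tilde\rho!=(d-|\rho|)!\,\rho!$ is right), prove the evaluation bound by Cauchy--Schwarz plus the multinomial theorem, and derive the derivative bound from the multilinear estimate $|\tilde G(y_1,\dots,y_d)|\leq\|G\|\,\|y_1\|\cdots\|y_d\|$ together with $D^kG(y)(v_1,\dots,v_k)=\tfrac{d!}{(d-k)!}\tilde G(y,\dots,y,v_1,\dots,v_k)$ --- and then add the genuinely ``affine'' observations that $G(1,x)=g(x)$, that $D^kg(x)$ is the restriction of $D^kG(1,x)$ to directions of the form $(0,v)$ (so its operator norm can only decrease), and that $\|(1,x)\|=\|x\|_1$. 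All of that is sound, and the evaluation bound is fully proved. The one place you are thinner than you should be is the multilinear Bombieri inequality itself: ``expanding each $y_i$ in the standard basis and applying Cauchy--Schwarz once'' does not obviously produce the constant $\|G\|\prod\|y_i\|$ without tracking the polarization coefficients $\tilde\rho!/d!$ carefully; the clean route is the reproducing-kernel property of the Weyl inner product, $\langle G,\langle\cdot,w\rangle^d\rangle=G(\bar w)$, applied to products of linear forms. You flag this honestly as the main obstacle, and since it is precisely the statement the paper outsources to \cite{SS93}, your proposal is an acceptable (indeed more informative) substitute for the paper's citation-only treatment.
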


Let $k\geq 2$.  Lemma~3 of \cite{SS93} yields
\[
\left(\frac{d\cdot(d-1)\cdots(d-k+1)}{d^{1/2}\cdot k!}\right)^{\frac{1}{k-1}} \leq \frac{d^{1/2}(d-1)}{2}
\leq \frac{d^{3/2}}{2}.
\]
Additionally, since $\|x\|_1\geq1$, we know $\|x\|_1^{d-1}\geq\|x\|_1^{d-k}$.
These facts together with Proposition~\ref{Prop:BoundEval} yield
$$\begin{array}{rcl}
\displaystyle\left\|\frac{D^k g(x)}{k!}\right\|^{\frac{1}{k-1}}
 &\leq& \left(\displaystyle \frac{d^{1/2}\cdot\|D^k g(x)\|}{d^{1/2}\cdot k!}\right)^{\frac{1}{k-1}} \\
 &\leq& d^{\frac{1}{2(k-1)}} \left(\displaystyle\frac{d\cdot(d-1)\cdots(d-k+1)\cdot\|g\|\cdot\|x\|_1^{d-k}}
       {d^{1/2} k!}\right)^{\frac{1}{k-1}} \\
 &\leq& \left(d^{1/2}\cdot\|x\|_1^{d-k}\cdot\|g\|\right)^{\frac{1}{k-1}}
    \left(\displaystyle\frac{d\cdot(d-1)\cdots(d-k+1)}{d^{1/2}\cdot k!}\right)^{\frac{1}{k-1}} \\
 &\leq& \displaystyle\frac{d^{3/2}}{2\|x\|_1}
     \left(d^{1/2}\cdot\|x\|_1^{d-1}\cdot\|g\|\right)^{\frac{1}{k-1}}
\end{array}$$
which we summarize in the following proposition.

\begin{proposition}\label{Prop:BoundK}
If $g:\bC^n\rightarrow\bC$ is a polynomial of degree $d$,
then, for all $x\in\bC^n$ and $k\geq 2$,
\[
\displaystyle\left\|\frac{D^k g(x)}{k!}\right\|^{\frac{1}{k-1}} \leq \displaystyle\frac{d^{3/2}}{2\|x\|_1}
     \left(d^{1/2}\cdot\|x\|_1^{d-1}\cdot\|g\|\right)^{\frac{1}{k-1}}.
\]
\end{proposition}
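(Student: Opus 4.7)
The plan is to assemble the bound from two ingredients already in hand: the pointwise estimate on $\|D^k g(x)\|$ from Proposition~\ref{Prop:BoundEval}, and the combinatorial inequality stated as Lemma~3 of \cite{SS93}. Since the left-hand side of the target inequality is $\bigl\|D^k g(x)/k!\bigr\|^{1/(k-1)}$, I would start by dividing the bound $\|D^k g(x)\| \le d(d-1)\cdots(d-k+1)\,\|g\|\,\|x\|_1^{d-k}$ by $k!$ and then raising the result to the $1/(k-1)$ power. (If $k > d$ the derivative vanishes identically and the inequality is trivial, so I may assume $k \le d$.)

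Next I would insert the harmless factor $d^{1/2}/d^{1/2}$ to isolate the combinatorial coefficient in precisely the form handled by Lemma~3 of \cite{SS93}, which yields
\[
\left(\frac{d(d-1)\cdots(d-k+1)}{d^{1/2}\cdot k!}\right)^{\!1/(k-1)} \le \frac{d^{1/2}(d-1)}{2} \le \frac{d^{3/2}}{2}.
\]
What remains inside the $(k-1)$-th root is then the factor $d^{1/2}\cdot\|g\|\cdot\|x\|_1^{d-k}$.

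Finally, to match the prefactor $1/\|x\|_1$ sitting outside the parenthesis in the statement, I would use the elementary bound $\|x\|_1 \ge 1$ to rewrite $\|x\|_1^{d-k} \le \|x\|_1^{d-1}/\|x\|_1^{k-1}$; taking the $(k-1)$-th root of $\|x\|_1^{k-1}$ pulls out exactly the desired $1/\|x\|_1$ while leaving $\|x\|_1^{d-1}$ inside. Multiplying the three pieces gives the claimed inequality. There is really no conceptual obstacle here: the proof is a careful bookkeeping of exponents, with the only subtle step being the invocation of the Shub--Smale combinatorial lemma, which is what makes the $(k-1)$-th root behave uniformly in $k$.
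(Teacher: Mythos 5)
Your proposal is correct and follows essentially the same route as the paper: apply Proposition~\ref{Prop:BoundEval}, insert the factor $d^{1/2}/d^{1/2}$ so that Lemma~3 of \cite{SS93} bounds the combinatorial coefficient by $d^{3/2}/2$, and split $\|x\|_1^{d-k} = \|x\|_1^{d-1}\cdot\|x\|_1^{-(k-1)}$ to extract the $1/\|x\|_1$ prefactor. The only cosmetic difference is that this last step is an exact identity rather than an inequality needing $\|x\|_1\geq 1$, but that does not affect the argument.
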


Let $f:\bC^n\rightarrow\bC^n$ be a polynomial system
with $\deg f_i = d_i$.  Define $D = \max d_i$ and
\begin{equation}\label{Eq:MuPoly}
\mu(f,x) = \max\{1,~\|f\|\cdot\|Df(x)^{-1} \Delta_{(d)}(x)\|\}
\end{equation}
assuming $Df(x)$ is invertible where
\begin{equation}\label{Eq:Delta}
\Delta_{(d)}(x) = \left[\begin{array}{ccc} d_1^{1/2}\cdot\|x\|_1^{d_1-1} & & \\ & \ddots & \\ & & d_n^{1/2}\cdot\|x\|_1^{d_n-1} \end{array}\right].
\end{equation}
Since $\mu(f,x)\geq 1$, $\mu(f,x)^{\frac{1}{k-1}} \leq \mu(f,x)$ for any $k\geq2$.

The following version of
Proposition~3 of \cite[\S~I-3]{SS93} yields an upper bound for $\gamma(f,x)$.

\begin{proposition}\label{Prop:GammaBound}
Let $f:\bC^n\rightarrow\bC^n$ be a polynomial system with $\deg f_i = d_i$ and
$D = \max d_i$.  For any $x\in\bC^n$ such that $Df(x)$ is invertible,
\[
\gamma(f,x)\leq\frac{\mu(f,x)\cdot D^{3/2}}{2\cdot\|x\|_1}.
\]
\end{proposition}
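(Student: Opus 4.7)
The plan is to factor the operator $Df(x)^{-1} D^kf(x)/k!$ through the diagonal weight $\Delta_{(d)}(x)$ and then bound each resulting factor separately, turning the sup defining $\gamma(f,x)$ into an exponential in $k-1$ times a constant.

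First I would write
\[
\frac{Df(x)^{-1} D^k f(x)}{k!}
 = \bigl(Df(x)^{-1}\Delta_{(d)}(x)\bigr)\cdot\left(\frac{\Delta_{(d)}(x)^{-1} D^k f(x)}{k!}\right),
\]
so the operator norm satisfies
\[
\left\|\frac{Df(x)^{-1} D^k f(x)}{k!}\right\|
\leq \|Df(x)^{-1}\Delta_{(d)}(x)\|\cdot\left\|\frac{\Delta_{(d)}(x)^{-1} D^k f(x)}{k!}\right\|.
\]
The first factor is exactly what appears in the definition (\ref{Eq:MuPoly}) of $\mu(f,x)$: it equals $\mu(f,x)/\|f\|$ when $\mu(f,x)>1$, and is bounded by $1/\|f\|$ otherwise, so in both cases the product with $\|f\|$ is $\leq \mu(f,x)$.

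Next I would bound $\|\Delta_{(d)}(x)^{-1} D^k f(x)/k!\|$ componentwise. Since $\Delta_{(d)}(x)^{-1}$ acts on the output vector of the $k$-linear map $D^k f(x)$, writing the output norm as a sum of squares over coordinates gives
\[
\left\|\frac{\Delta_{(d)}(x)^{-1} D^k f(x)}{k!}\right\|^2
 \leq \sum_{i=1}^n \frac{\|D^k f_i(x)/k!\|^2}{d_i\,\|x\|_1^{2(d_i-1)}}.
\]
Now Proposition~\ref{Prop:BoundEval} yields $\|D^k f_i(x)/k!\|\leq \frac{d_i(d_i-1)\cdots(d_i-k+1)}{k!}\|f_i\|\,\|x\|_1^{d_i-k}$, and the Shub--Smale estimate (Lemma~3 of \cite{SS93}) already used before Proposition~\ref{Prop:BoundK} gives $\frac{d_i(d_i-1)\cdots(d_i-k+1)}{d_i^{1/2}\,k!}\leq (d_i^{3/2}/2)^{k-1}\leq (D^{3/2}/2)^{k-1}$. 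Substituting and pulling out the factor $\|x\|_1^{-2(k-1)}$ term by term collapses the sum to $\|f\|^2$, producing
\[
\left\|\frac{\Delta_{(d)}(x)^{-1} D^k f(x)}{k!}\right\|
 \leq \left(\frac{D^{3/2}}{2\|x\|_1}\right)^{k-1}\|f\|.
\]

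Combining the two factor bounds gives $\|Df(x)^{-1} D^kf(x)/k!\|\leq \mu(f,x)\cdot (D^{3/2}/(2\|x\|_1))^{k-1}$ for every $k\geq 2$. Taking the $(k-1)$-th root and invoking $\mu(f,x)^{1/(k-1)}\leq \mu(f,x)$ (noted just above the statement) yields the desired termwise bound, and the sup over $k\geq 2$ gives $\gamma(f,x)\leq \mu(f,x)\cdot D^{3/2}/(2\|x\|_1)$. The only delicate point is the interpretation of the operator norm on the vector-valued $k$-linear map $D^kf(x)$ so that the diagonal scaling on the output legitimately passes through to the componentwise bound above; once that is fixed, the rest is substitution and the Shub--Smale combinatorial inequality.
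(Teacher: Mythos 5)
Your proposal is correct and follows essentially the same route as the paper: factor $Df(x)^{-1}D^kf(x)/k!$ through $\Delta_{(d)}(x)$, absorb the first factor into $\mu(f,x)$ using $\mu(f,x)^{1/(k-1)}\leq\mu(f,x)$, and bound the second factor componentwise via the Shub--Smale derivative estimate so the sum collapses against $\|f\|^2$. The only cosmetic difference is that you re-derive the componentwise bound from Proposition~\ref{Prop:BoundEval} and Lemma~3 of \cite{SS93} inline, whereas the paper cites Proposition~\ref{Prop:BoundK}, which packages exactly that computation.
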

\begin{proof}
For $k\geq2$, we have
$$\begin{array}{rcl}
\displaystyle\left\|\frac{Df(x)^{-1} D^k f(x)}{k!}\right\|^{\frac{1}{k-1}}
 &\leq&\left(\|f\|\cdot\|Df(x)^{-1}\Delta_{(d)}(x)\|\right)^{\frac{1}{k-1}}
       \displaystyle\left\|\frac{\Delta_{(d)}(x)^{-1} D^k f(x)}{\|f\|\cdot k!}\right\|^{\frac{1}{k-1}} \\
 &\leq& \mu(f,x)
    \displaystyle\left(\sum_{i=1}^n \frac{\|f_i\|^2}{\|f\|^2}\left(\frac{d_i^{3/2}}{2\cdot\|x\|_1}\right)^{2(k-1)}
      \right)^{\frac{1}{2(k-1)}} \\
 &\leq& \displaystyle\frac{\mu(f,x) D^{3/2}}{2\cdot\|x\|_1}.
 \end{array}$$
\end{proof}

\section{Certifying solutions}\label{Sec:CertifySolns}

Since the bound provided in Proposition~\ref{Prop:GammaBound}
does not apply to a polynomial-exponential system~$\sF$,
we develop a new bound based on the solutions of linear homogeneous
partial differential equations.  With this bound, algorithms
for certifying approximate solutions, distinct associated solutions,
and real associated solutions of \cite{alphaCertifiedPaper}
apply to $\sF$.

Consider $g(x) = e^{\beta x}$ for some $\beta\in\bC$.
Clearly, for any $k\geq 0$, $|g^{(k)}(x)| = |\beta|^k\cdot |g(x)|$.
By letting $B(x) = |g(x)|$ and $C = \max\{1,|\beta| \}$, we have
\begin{equation}\label{Eq:BoundExpBeta}
|g^{(k)}(x)| \leq C^k\cdot B(x).
\end{equation}
The following lemma shows that a similar bound holds in general.

\begin{lemma}\label{Lemma:ExpBoundGen}
Let $c_0,\dots,c_{r-1}\in\bC$, $\ell(x_0,\dots,x_r) = x_r - \sum_{i=0}^{r-1} c_i x_i$,
and $g:\bC\rightarrow\bC$ be an analytic function such that $\ell(g,g',\dots,g^{(r)}) = 0$
and $r$ is minimal with such a property.
If
\[
B(x) = \max\{|g(x)|,|g'(x)|,\dots,|g^{(r-1)}(x)|\} \hbox{~~and~~} C = \max \{1,|c_0|,\dots,|c_{r-1}|\},
\]
then, for any $x\in\bC$ and $k\geq 0$, we have
\[
|g^{(k)}(x)| \leq \left\{\begin{array}{ll} B(x) & \hbox{if~} k < r \\
(2\cdot C)^{k-r}\cdot r\cdot B(x) \cdot C \cdot & \hbox{if~} k\geq r. \end{array}\right.
\]
In particular, $|g^{(k)}(x)|\leq (2\cdot C)^{k-1}\cdot r\cdot B(x)\cdot C = 2^{k-1}\cdot r\cdot C^k\cdot B(x)$.
\end{lemma}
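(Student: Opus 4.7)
The plan is to dispatch $k < r$ trivially from the definition of $B(x)$ as the maximum of $|g(x)|, |g'(x)|, \ldots, |g^{(r-1)}(x)|$, and then induct on $k$ starting from $k = r$ for the main case. The key setup is that the hypothesis $\ell(g, g', \ldots, g^{(r)}) = 0$ reads $g^{(r)} = \sum_{i=0}^{r-1} c_i g^{(i)}$, and differentiating this identity $k - r$ times yields
\[
g^{(k)}(x) = \sum_{i=0}^{r-1} c_i\, g^{(k - r + i)}(x) \qquad (k \geq r),
\]
so the triangle inequality together with $|c_i| \leq C$ gives the workhorse recursion $|g^{(k)}(x)| \leq C \sum_{i=0}^{r-1} |g^{(k - r + i)}(x)|$. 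The base case $k = r$ is then immediate from $|g^{(i)}(x)| \leq B(x)$ for $i < r$, giving $|g^{(r)}(x)| \leq r C B(x) = (2C)^0 \cdot r C B(x)$.

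For the inductive step from $\{r, \ldots, k\}$ to $k + 1 > r$, set $m = k + 1 - r \geq 1$ and split the indices $j = k + 1 - r + i$ appearing in the workhorse recursion into those with $j < r$ (bounded by $B(x)$) and those with $j \geq r$ (bounded by $(2C)^{j - r} r C B(x)$ via the inductive hypothesis). Summing the resulting geometric series reduces the inductive step, in the mixed case $1 \leq m < r$, to the algebraic inequality
\[
\frac{r - m}{r} + C \cdot \frac{(2C)^m - 1}{2C - 1} \leq (2C)^m,
\]
equivalently $\frac{(C-1)(2C)^m + C}{2C - 1} \geq \frac{r - m}{r}$. The left side is at least $1$, since its numerator minus its denominator equals $(C-1)\bigl((2C)^m - 1\bigr) \geq 0$ for $C \geq 1$, and the right side is strictly less than $1$ because $m \geq 1$. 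In the non-mixed case $m \geq r$ the analogous reduction produces the even simpler $(1 - C)(2C)^r \leq C$, which is automatic for $C \geq 1$.

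For the concluding ``In particular'' assertion $|g^{(k)}(x)| \leq 2^{k - 1} r C^k B(x) = (2C)^{k-1} r C B(x)$, I would just reconcile the two ranges: when $k \geq r$ the main bound $(2C)^{k - r} r C B(x)$ is at most $(2C)^{k - 1} r C B(x)$ because $r \geq 1$ forces $(2C)^{k-r} \leq (2C)^{k-1}$, and when $1 \leq k < r$ the trivial bound $B(x)$ is at most $2^{k - 1} r C^k B(x)$ since $2^{k - 1} r C^k \geq 1$. The main obstacle throughout is the mixed-case algebraic inequality above, a routine but slightly fiddly geometric-series estimate; everything else is bookkeeping.
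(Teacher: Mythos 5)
Your proposal is correct and follows essentially the same route as the paper's proof: differentiate the relation $g^{(r)}=\sum_{i=0}^{r-1}c_ig^{(i)}$ to get the recursion $|g^{(k)}(x)|\leq C\sum_{i=0}^{r-1}|g^{(k-r+i)}(x)|$, then induct from $k=r$, splitting the derivative indices at $r$ into those covered by $B(x)$ and those covered by the inductive hypothesis. The only difference is cosmetic: you evaluate the geometric series exactly and verify an explicit algebraic inequality, whereas the paper over-counts the low-order block by $rB(x)$ and absorbs constants via $\sum_{j=0}^{p-1}(2C)^j\leq C^{p-1}\sum_{j=0}^{p-1}2^j$; both yield the stated bound.
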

\begin{proof}
We know $g^{(r)} = \sum_{i=0}^{r-1} c_i g^{(i)}(x)$.  For any $k > r$, by differentiation,
we know
\[
g^{(k)} = \sum_{i=0}^{r-1} c_i g^{(i+k-r)}(x).
\]

We will now proceed by induction starting at $k = r$.  In particular,
\[
|g^{(r)}(x)| \leq \sum_{i=0}^{r-1} |c_i|\cdot |g^{(i)}(x)| \leq B(x)\cdot C\sum_{i=0}^{r-1} 1 = r \cdot B(x) \cdot C.
\]

For $k > r$ with $p = k - r$, we have
\[
\begin{array}{rcl}
|g^{(k)}(x)| &\leq& \displaystyle\sum_{i=0}^{r-1} |c_i|\cdot |g^{(i+p)}(x)| \leq
C \left(\displaystyle\sum_{i=0}^{\max \{r-1-p,0\} } |g^{(i+p)}(x)| + \displaystyle\sum_{i = \max\{0,r-p\}}^{r-1} |g^{(i+p)}(x)|\right) \\
&\leq& C \left(r\cdot B(x) + r\cdot B(x)\cdot C \displaystyle\sum_{i=r-p}^{r-1} (2\cdot C)^{i+p-r}\right) \\
&\leq& r\cdot B(x)\cdot C^2\left(1 + C^{p-1}\displaystyle\sum_{i=0}^{p-1} 2^i\right) \\
&\leq& 2^p\cdot r\cdot B(x)\cdot C^{p+1} \\
&=& (2\cdot C)^{k-r} \cdot r\cdot B(x) \cdot C.
\end{array}
\]
The remaining statement follows from the fact that $C\geq 1$ and $r\geq 1$.
\end{proof}

The following lemma will also be used to deduce our bound.

\begin{lemma}\label{Lemma:Sup}
If $\delta_0\geq0$ and $\alpha_1,\delta_1,\dots,\alpha_m,\delta_m\geq 1$, then
\[
\sup_{k\geq2} \left(\delta_0^{2(k-1)} + 2^{2(k-1)}\displaystyle\sum_{i=1}^m \left(\alpha_i^k \delta_i\right)^2\right)^{\frac{1}{2(k-1)}} \leq
\delta_0 + 2 \sum_{i=1}^m \alpha_i^2 \delta_i.
\]
\end{lemma}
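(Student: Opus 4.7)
The plan is to raise both sides to the power $2(k-1)$ to remove the root, so that it suffices to show
\[
\delta_0^{2(k-1)} + 2^{2(k-1)}\sum_{i=1}^m \alpha_i^{2k}\delta_i^{2} \;\leq\; S^{2(k-1)}, \qquad S := \delta_0 + 2\sum_{i=1}^m \alpha_i^2 \delta_i,
\]
for every $k\geq 2$. Since this is the uniform bound in $k$, the supremum inequality follows immediately.

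To bound $S^{2(k-1)}$ from below I would use the multinomial expansion of $S^{2(k-1)}$, viewed as $(\delta_0 + 2\alpha_1^2\delta_1 + \cdots + 2\alpha_m^2\delta_m)^{2(k-1)}$. All terms are nonnegative, so keeping only the pure powers yields
\[
S^{2(k-1)} \;\geq\; \delta_0^{2(k-1)} + \sum_{i=1}^m \bigl(2\alpha_i^2 \delta_i\bigr)^{2(k-1)} \;=\; \delta_0^{2(k-1)} + 2^{2(k-1)}\sum_{i=1}^m \alpha_i^{4(k-1)}\delta_i^{2(k-1)}.
\]
Comparing term by term with the left-hand side, it thus suffices to check that $\alpha_i^{4(k-1)}\delta_i^{2(k-1)} \geq \alpha_i^{2k}\delta_i^{2}$ for each $i$, which reduces to
\[
\alpha_i^{2(k-2)}\,\delta_i^{2(k-2)} \;\geq\; 1.
\]
This is immediate from the hypotheses $\alpha_i,\delta_i\geq 1$ and $k\geq 2$ (the exponent $2(k-2)$ is nonnegative).

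There is no real obstacle here: once the problem is rephrased as an inequality between $2(k-1)$-th powers, the only mildly delicate step is recognizing that dropping cross terms in the multinomial expansion gives a sum of pure powers with matching exponent $2(k-1)$, which lines up correctly with the $2^{2(k-1)}$ prefactor on the left. The only ``work'' is then the elementary reduction $\alpha_i^{4(k-1)-2k}\delta_i^{2(k-1)-2}\geq 1$, handled by the assumption that $\alpha_i$ and $\delta_i$ are at least one. Note that $\delta_0$ is allowed to be zero but plays no role beyond providing its own pure-power term in the expansion, so the argument is unaffected.
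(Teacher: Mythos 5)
Your proof is correct and follows essentially the same route as the paper's: both drop the cross terms in the expansion of $\left(\delta_0 + 2\sum_i \alpha_i^2\delta_i\right)^{2(k-1)}$ to retain the pure powers, then compare termwise using $\alpha_i,\delta_i\geq 1$ and $k\geq 2$ (the paper states this as the pair of inequalities $\alpha_i^{4(k-1)}\geq\alpha_i^{2k}$ and $\delta_i^{2(k-1)}\geq\delta_i^2$, which is your reduction $\alpha_i^{2(k-2)}\delta_i^{2(k-2)}\geq 1$ split into two pieces). No substantive difference.
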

\begin{proof}
Fix $k\geq2$.  Since $2(k-1)\geq2$ and $4(k-1)\geq2k$, we know $\alpha_i^{4(k-1)}\geq\alpha_i^{2k}$ and $\delta_i^{2(k-1)}\geq\delta_i^2$
for $i = 1,\dots,m$.  The lemma now follows since
\[
\begin{array}{rcl}
\left(\delta_0 + 2 \displaystyle\sum_{i=1}^m \alpha_i^2 \delta_i\right)^{2(k-1)}
&\geq&
    \delta_0^{2(k-1)} + 2^{2(k-1)}\left(\displaystyle\sum_{i=1}^m \alpha_i^2 \delta_i\right)^{2(k-1)} \\
&\geq&
    \delta_0^{2(k-1)} + 2^{2(k-1)} \displaystyle\sum_{i=1}^m \alpha_i^{4(k-1)} \delta_i^{2(k-1)} \\
&\geq&
    \delta_0^{2(k-1)} + 2^{2(k-1)} \displaystyle\sum_{i=1}^m \alpha_i^{2k} \delta_i^2.
\end{array}
\]
\end{proof}

Throughout the remainder of this section, we assume that
$\sF:\bC^{n+m}\rightarrow\bC^{n+m}$ is a polynomial-exponential system such that
there exists a polynomial system $P:\bC^{n+m}\rightarrow\bC^n$,
analytic functions $g_1,\dots,g_m:\bC\rightarrow\bC$,
and integers $\sigma_1,\dots,\sigma_m\in\{1,\dots,n\}$ such that
\begin{equation}\label{Eq:PolyExp}
\sF(x_1,\dots,x_n,y_1,\dots,y_m) = \left[\begin{array}{c} P(x_1,\dots,x_n,y_1,\dots,y_m)
\\ y_1 - g_1(x_{\sigma_1}) \\ \vdots \\ y_m - g_m(x_{\sigma_m}) \end{array}\right].
\end{equation}
Also, for $i = 1,\dots,n$, we define $d_i = \deg P_i$ and $D = \max d_i$.

We assume that each $g_i$ satisfies some nonzero linear homogeneous PDE with
complex coefficients.  For each $i = 1,\dots,m$, let $r_i$ be the smallest positive integer
such that there exists a nonzero linear function $\ell_i:\bC^{r_i+1}\rightarrow\bC$
with $\ell_i(g_i,g_i',\dots,g_i^{(r_i)}) = 0$.  By construction,
the coefficient of $z_{r_i}$ in $\ell_i(z_0,z_1,\dots,z_{r_i})$ must
be nonzero.  Upon rescaling $\ell_i$, we will assume that this coefficient is one, that is,
we have
\begin{equation}\label{Eq:LinearODE}
\ell_i(z_0,z_1,\dots,z_{r_i}) = z_{r_i} - c_{i,r_i-1}z_{r_i-1} - \cdots - c_{i,0}z_0
\end{equation}
which yields $g_i^{(r_i)} = \sum_{j=0}^{r_i-1} c_{i,j} g_i^{(j)}$.
We note that the minimal integer $r_i$ with such a property is called the {\em order} of $g_i$.

For example, for nonzero $\lambda,\mu\in\bC$, if $g_1(x) = e^{\lambda x}$, $g_2(x) = \cos(\mu x)$, and $g_3(x) = x \sin(x)$,
then the order of $g_i$ is $1, 2,$ and $4$, respectively.  The corresponding differential
equations are
\[
\frac{\d g_1}{\d x} - \lambda g_1 = 0, \
\frac{\d^2 g_2}{\d x^2} + \mu^2 g_2 = 0, \ \hbox{~and~} \
\frac{\d^4 g_3}{\d x^4} + 2 \frac{\d^2 g_3}{\d x^2} + g_3 = 0
\]
with linear functions
\[
\ell_1(z_0,z_1) = z_1 - \lambda z_0, ~~~ \ell_2(z_0,z_1,z_2) = z_2 + \mu^2 z_0, \ \hbox{~and~} \ \ell_3(z_0,z_1,z_2,z_3,z_4) = z_4 + 2 z_2 + z_0.
\]

The bound obtained in Proposition~\ref{Prop:GammaBound} depends upon $\mu(f,x)$
defined in (\ref{Eq:MuPoly}) for polynomial systems.  We extend this to
polynomial-exponential systems by defining
\begin{equation}\label{Eq:MuExp}
\mu(\sF,(x,y)) = \max\left\{1, \left\|D\sF(x,y)^{-1} \left[\begin{array}{cc} \Delta_{(d)}(x,y)\|P\| & \\ & I_m \end{array}\right]\right\|\right\}
\end{equation}
assuming that $D\sF(x,y)$ is invertible.  The matrix $\Delta_{(d)}(x,y)$ is the $n\times n$
diagonal matrix defined in (\ref{Eq:Delta}) and $I_m$ is the $m\times m$ identity matrix.
We note that (\ref{Eq:MuExp}) reduces to (\ref{Eq:MuPoly}) when $m = 0$.

The following theorem yields a bound for $\gamma(\sF,(x,y))$.

\begin{theorem}\label{Thm:ExpBound}
For $i = 1,\dots,m$ and $z\in\bC$, define
\[
B_i(z) = \max\{|g_i(z)|,\dots,|g_i^{(r_i-1)}(z)|\} \hbox{~~and~~} C_i = \max\{1,|c_{i,0}|,\dots,|c_{i,r_i-1}|\}.
\]
Then, for any $(x,y)\in\bC^{n+m}$ such that $D\sF(x,y)$ is invertible,
\begin{equation}\label{Eq:GammaBound}
\gamma(\sF,(x,y))\leq \mu(\sF,(x,y))\left(\frac{D^{3/2}}{2\|(x,y)\|_1} + 2 \sum_{i=1}^m C_i^2 \max\{1,r_i\cdot B_i(x_{\sigma_i})\}\right).
\end{equation}
\end{theorem}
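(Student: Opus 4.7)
The plan is to follow the proof strategy of Proposition~\ref{Prop:GammaBound}, but treat the polynomial rows and the exponential rows of $\sF$ separately and then use Lemma~\ref{Lemma:ExpBoundGen} to handle the latter. Let $M = \left[\begin{array}{cc} \Delta_{(d)}(x,y)\|P\| & 0 \\ 0 & I_m \end{array}\right]$, so that $\mu(\sF,(x,y)) \geq \|D\sF(x,y)^{-1} M\|$ by (\ref{Eq:MuExp}). For each $k\geq 2$, factoring $D\sF^{-1} = D\sF^{-1}M \cdot M^{-1}$ and using $\mu \geq 1$ gives
\[
\left\|\frac{D\sF(x,y)^{-1} D^k \sF(x,y)}{k!}\right\|^{\frac{1}{k-1}} \leq \mu(\sF,(x,y)) \left\|\frac{M^{-1} D^k \sF(x,y)}{k!}\right\|^{\frac{1}{k-1}},
\]
so the work reduces to estimating the row-wise norm on the right.

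For the top $n$ (polynomial) rows, apply Proposition~\ref{Prop:BoundK} to each $P_i$ and sum the squares using $\sum_i \|P_i\|^2 = \|P\|^2$ to obtain
\[
\sum_{i=1}^n \frac{\|D^k P_i(x,y)/k!\|^2}{d_i\,\|(x,y)\|_1^{2(d_i-1)}\,\|P\|^2} \leq \left(\frac{D^{3/2}}{2\|(x,y)\|_1}\right)^{2(k-1)} = \delta_0^{2(k-1)},
\]
where $\delta_0 = D^{3/2}/(2\|(x,y)\|_1)$. For the bottom $m$ (exponential) rows, observe that $y_j - g_j(x_{\sigma_j})$ is linear in $y_j$ and depends on $x$ only through $x_{\sigma_j}$, so for $k\geq 2$ the only surviving $k$-th partial is $\partial^k/\partial x_{\sigma_j}^k$, giving row-norm $|g_j^{(k)}(x_{\sigma_j})|/k!$. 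Applying Lemma~\ref{Lemma:ExpBoundGen} and using $k! \geq 2^{k-1}$,
\[
\frac{|g_j^{(k)}(x_{\sigma_j})|}{k!} \leq r_j\, C_j^k\, B_j(x_{\sigma_j}) \leq 2^{k-1} C_j^k \max\{1,\, r_j B_j(x_{\sigma_j})\},
\]
so the exponential contribution to $\|M^{-1} D^k\sF/k!\|^2$ is at most $2^{2(k-1)} \sum_j (\alpha_j^k \delta_j)^2$, where $\alpha_j = C_j$ and $\delta_j = \max\{1,\, r_j B_j(x_{\sigma_j})\}$.

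Adding the two contributions gives $\|M^{-1} D^k\sF/k!\|^2 \leq \delta_0^{2(k-1)} + 2^{2(k-1)} \sum_{j=1}^m (\alpha_j^k \delta_j)^2$ with $\alpha_j, \delta_j \geq 1$, which is precisely the form required by Lemma~\ref{Lemma:Sup}. Taking the supremum of the $(1/(2(k-1)))$-th power over $k \geq 2$ and applying that lemma yields
\[
\sup_{k \geq 2}\left\|\frac{M^{-1} D^k\sF(x,y)}{k!}\right\|^{\frac{1}{k-1}} \leq \delta_0 + 2\sum_{j=1}^m \alpha_j^2\delta_j,
\]
and combining with the first paragraph produces (\ref{Eq:GammaBound}). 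The one place that needs care is matching the exponential estimate to the template of Lemma~\ref{Lemma:Sup}: ensuring $\delta_j \geq 1$ is exactly what forces the use of $\max\{1,\, r_j B_j(x_{\sigma_j})\}$ rather than $r_j B_j(x_{\sigma_j})$ alone, and ensuring the factor $2^{k-1}$ appears on each exponential row (rather than being cancelled against $k!$) is what allows Lemma~\ref{Lemma:Sup} to produce a clean $k$-independent bound.
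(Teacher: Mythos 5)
Your proposal is correct and follows essentially the same route as the paper: factor out $\mu(\sF,(x,y))$ via the block-diagonal matrix $M$, bound the polynomial rows with Proposition~\ref{Prop:BoundK}, bound the exponential rows with Lemma~\ref{Lemma:ExpBoundGen}, and finish with Lemma~\ref{Lemma:Sup}. The only cosmetic difference is that you cancel the $2^{k-1}$ from Lemma~\ref{Lemma:ExpBoundGen} against $k!$ and then reinsert it to match the template of Lemma~\ref{Lemma:Sup}, whereas the paper simply drops the $1/k!$; both land on the identical bound.
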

\begin{proof}
Let $\sM = \left[\begin{array}{cc} \Delta_{(d)}(x,y)\|P\| & \\ & I_m \end{array}\right]$ and $k\geq2$.
We have
\[
\begin{array}{rcl}
\displaystyle\left\|\frac{D\sF(x,y)^{-1} D^k\sF(x,y)}{k!}\right\| &\leq&
\displaystyle\left\|D\sF(x,y)^{-1} \sM\right\| \left\|\frac{\sM^{-1} D^k\sF(x,y)}{k!}\right\| \\
&\leq&
\displaystyle\mu(\sF,(x,y)) \left\|\frac{\sM^{-1} D^k\sF(x,y)}{k!}\right\|.
\end{array}
\]
By Proposition~\ref{Prop:BoundK} and Lemma~\ref{Lemma:ExpBoundGen},
\[
\begin{array}{rcl}
\displaystyle\left\|\frac{\sM^{-1} D^k\sF(x,y)}{k!}\right\|^2 &=&
\displaystyle\sum_{i=1}^n \left\|\frac{D^kP_i(x,y)}{d_i^{1/2}\cdot\|(x,y)\|_1^{d_i-1}\cdot\|P\|\cdot k!}\right\|^2
+ \sum_{i=1}^m \left\|\frac{D^k g_i(x_{\sigma_i})}{k!}\right\|^2 \\
&\leq& \displaystyle\sum_{i=1}^n \frac{\|P_i\|^2}{\|P\|^2}\left(\frac{d_i^{3/2}}{2\|(x,y)\|_1}\right)^{2(k-1)} +
\sum_{i=1}^m \left(2^{k-1}\cdot r_i\cdot C_i^k\cdot B_i(x_{\sigma_i})\right)^2 \\
&\leq& \displaystyle\left(\frac{D^{3/2}}{2\|(x,y)\|_1}\right)^{2(k-1)} +
2^{2(k-1)} \sum_{i=1}^m \left(r_i\cdot C_i^k\cdot B_i(x_{\sigma_i})\right)^2.
\end{array}
\]
This yields
\[
\begin{array}{rcl}
\gamma(\sF,(x,y)) &=& \displaystyle\sup_{k\geq2} \left\|\frac{D\sF(x,y)^{-1} D^k\sF(x,y)}{k!}\right\|^{\frac{1}{k-1}} \\
&\leq& \displaystyle \mu(\sF,(x,y))\sup_{k\geq2}
\left(\left(\frac{D^{3/2}}{2\|(x,y)\|_1}\right)^{2(k-1)} +
2^{2(k-1)} \sum_{i=1}^m \left(r_i\cdot C_i^k\cdot B_i(x_{\sigma_i})\right)^2\right)^{\frac{1}{2(k-1)}} \\
&\leq& \displaystyle \mu(\sF,(x,y))\sup_{k\geq2}
\left(\left(\frac{D^{3/2}}{2\|(x,y)\|_1}\right)^{2(k-1)} + \right.\\
& & ~~~~~~~~~~~~~~~~~~~~~~~~~~~~~~~~ \left.
\displaystyle 2^{2(k-1)} \sum_{i=1}^m \left(C_i^k \max\{1,r_i\cdot B_i(x_{\sigma_i})\}\right)^2\right)^{\frac{1}{2(k-1)}}.
\end{array}
\]
The result now follows from Lemma~\ref{Lemma:Sup}.
\end{proof}

\begin{remark}
When $m = 0$, the bounds provided in Theorem~\ref{Thm:ExpBound} and Proposition~\ref{Prop:GammaBound} agree.
\end{remark}

The following is an algorithm to certify approximate solutions of $\sF = 0$.

\begin{description}
  \item[Procedure $B = \AlgCertX(\sF,z)$]
  \item[Input] A polynomial-exponential system $\sF:\bC^{n+m}\rightarrow\bC^{n+m}$ and a point $z\in\bC^{n+m}$.
  \item[Output] A boolean which is \mbox{\it True} if $z$ can be certified as an approximate solution of $\sF = 0$, otherwise, \mbox{\it False}.
  \item[Begin] \hskip -0.1in
  \begin{enumerate}
    \item If $\sF(z) = 0$, return \mbox{\it True}, otherwise, if $D\sF(z)$ is not invertible, return \mbox{\it False}.
    \item Set $\beta := \|D\sF(z)^{-1}\sF(z)\|$ and $\gamma$ to be the upper bound for $\gamma(\sF,z)$ provided in
          Theorem~\ref{Thm:ExpBound}.
    \item If $\beta\cdot\gamma < \displaystyle\frac{13 - 3 \sqrt{17}}{4}$, return \mbox{\it True}, otherwise return \mbox{\it False}.
  \end{enumerate}
\end{description}

The algorithms {\bf CertifyDistinctSoln} and {\bf CertifyRealSoln} from \cite{alphaCertifiedPaper}
apply to polynomial-exponential systems using the bound provided in Theorem~\ref{Thm:ExpBound}.
The algorithm {\bf CertifyDistinctSoln} determines if two approximate solutions have distinct
associated solutions.  The algorithm {\bf CertifyRealSoln} applies to
polynomial-exponential systems $\sF$ such that $N_\sF(\bR^{n+m})\subset\bR^{n+m}$
and determines if the associated solution to a given approximate solution is real.

We conclude this section with a refinement of Theorem~\ref{Thm:ExpBound} applied
to polynomial-exponential systems depending on $\exp$, $\sin$, $\cos$,
$\sinh$, and $\cosh$.  This refinement uses the following lemma.

\begin{lemma}\label{Lemma:Sup2}
If $\lambda_0,\dots,\lambda_m\geq0$ and $\mu_1,\dots,\mu_m\geq2$, then
\[
\sup_{k\geq2} \left(\lambda_0^{2(k-1)} + \displaystyle\sum_{i=1}^m \left(\frac{\mu_i \lambda_i^{k-1}}{k!}\right)^2\right)^{\frac{1}{2(k-1)}} \leq
\lambda_0 + \frac{1}{2}\sum_{i=1}^m \mu_i \lambda_i.
\]
\end{lemma}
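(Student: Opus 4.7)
The plan is to remove the supremum by establishing, for every $k \geq 2$, the pointwise bound
\[
\lambda_0^{2(k-1)} + \sum_{i=1}^m \left(\frac{\mu_i \lambda_i^{k-1}}{k!}\right)^2
\leq \left(\lambda_0 + \frac{1}{2}\sum_{i=1}^m \mu_i \lambda_i\right)^{2(k-1)},
\]
so that taking $2(k-1)$-th roots and then the supremum over $k$ yields the stated inequality.

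By the multinomial theorem, since all the $\lambda_j$ and $\mu_i$ are nonnegative,
\[
\left(\lambda_0 + \frac{1}{2}\sum_{i=1}^m \mu_i \lambda_i\right)^{2(k-1)}
\geq \lambda_0^{2(k-1)} + \sum_{i=1}^m \left(\frac{\mu_i \lambda_i}{2}\right)^{2(k-1)},
\]
since each term on the right appears in the multinomial expansion with coefficient $1$ and the remaining cross terms are nonnegative. It then suffices to compare term by term with the target sum: the $\lambda_0^{2(k-1)}$ contribution already matches, so only $(\mu_i \lambda_i / 2)^{2(k-1)} \geq (\mu_i \lambda_i^{k-1} / k!)^2$ must be verified for each $i$. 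Cancelling the common factor $\lambda_i^{2(k-1)}$ (the case $\lambda_i = 0$ being trivial), this reduces to the numerical inequality
\[
\mu_i^{k-2} \cdot k! \geq 2^{k-1}.
\]
For $k = 2$ both sides equal $2$; for $k \geq 3$, the hypothesis $\mu_i \geq 2$ gives $\mu_i^{k-2} \geq 1$, while $k! = 2 \cdot 3 \cdots k \geq 2 \cdot 2 \cdots 2 = 2^{k-1}$.

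The only genuine obstacle is the bookkeeping of exponents, so that the factorial weight $1/k!$ on the left is absorbed by the $2^{k-1}$ arising from the factor $1/2$ on the right. Beyond this, the argument needs only the monomial-domination step and the standard estimate $k! \geq 2^{k-1}$; no optimization in $k$ is required, which parallels the handling of Lemma~\ref{Lemma:Sup}.
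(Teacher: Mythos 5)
Your proof is correct and follows essentially the same route as the paper's: both replace the supremum by a pointwise bound for each fixed $k\geq 2$, drop the cross terms in the expansion of $\left(\lambda_0+\tfrac{1}{2}\sum_{i}\mu_i\lambda_i\right)^{2(k-1)}$ to get $\lambda_0^{2(k-1)}+\sum_i(\mu_i\lambda_i/2)^{2(k-1)}$, and then finish with a term-by-term comparison. The only (immaterial) difference is the final numerical verification: you reduce to $\mu_i^{k-2}\,k!\geq 2^{k-1}$ and invoke $k!\geq 2^{k-1}$, whereas the paper uses $(\mu_i/2)^{2(k-1)}\geq(\mu_i/2)^2$ together with $k!\geq 2$.
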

\begin{proof}
Fix $k\geq2$.  Since $2(k-1)\geq2$ and $\mu_i\geq2$, we know $\displaystyle\left(\frac{\mu_i}{2}\right)^{2(k-1)}\geq\left(\frac{\mu_i}{2}\right)^2$.
The lemma follows from
\[
\begin{array}{rcl}
\left(\lambda_0 + \displaystyle\frac{1}{2}\sum_{i=1}^m \mu_i \lambda_i\right)^{2(k-1)}
&\geq&
    \lambda_0^{2(k-1)} + \left(\displaystyle\sum_{i=1}^m \frac{\mu_i \lambda_i}{2}\right)^{2(k-1)} \\
&\geq&
    \lambda_0^{2(k-1)} + \displaystyle\sum_{i=1}^m \left(\frac{\mu_i}{2}\right)^{2(k-1)} \lambda_i^{2(k-1)} \\
&\geq&
    \lambda_0^{2(k-1)} + \displaystyle\sum_{i=1}^m \frac{\mu_i^2 \lambda_i^{2(k-1)}}{2^2} \\
&\geq&
    \lambda_0^{2(k-1)} + \displaystyle\sum_{i=1}^m \left(\frac{\mu_i \lambda_i^{(k-1)}}{k!}\right)^2.
\end{array}
\]
\end{proof}

Let $a,b,c,e,h\in\bZ_{\geq0}$,
$\delta_i,\epsilon_j,\zeta_k,\eta_p,\kappa_q\in\bC$,
and $\sigma_i,\tau_j,\phi_k,\chi_p,\psi_q\in\{1,\dots,n\}$.
The following considers the following polynomial-exponential system
\begin{align}
\sG(x_1,\dots,x_n,u_1,\dots,u_a,v_1,\dots,v_b,w_1,\dots,w_c,y_1,\dots,y_d,z_1,\dots,z_e) = ~~~~~~~~~\nonumber \\
~~~~\left[\begin{array}{c}
P(x_1,\dots,x_n,u_1,\dots,u_a,v_1,\dots,v_b,w_1,\dots,w_c,y_1,\dots,y_d,z_1,\dots,z_e) \\
\begin{array}{cc}
u_i - \exp(\delta_i x_{\sigma_i}), & i = 1,\dots,a \\
v_j - \sin(\epsilon_j x_{\tau_j}), & j = 1,\dots,b \\
w_k - \cos(\zeta_k x_{\phi_k}), & k = 1,\dots,c  \\
y_p - \sinh(\eta_p x_{\chi_p}), & p = 1,\dots,e  \\
z_q - \cosh(\kappa_q x_{\psi_q}), & q = 1,\dots,h \end{array}
\end{array}\right].\label{Eq:ReductionG}
\end{align}

\begin{corollary}\label{Cor:ExpBound}
Let $\sG$ be defined as in (\ref{Eq:ReductionG}) where $P:\bC^N\rightarrow\bC^n$ is
a polynomial system with $N = n+a+b+c+e+h$, $d_i = \deg P_i$ and $D = \max d_i$.
For any $\lambda$, $\theta\in\bC$, define
\[ \begin{array}{l}
A(\lambda,\theta) = \max\{|\lambda|,|\lambda^2 \exp(\lambda \theta)/2|\}, \\
B(\lambda,\theta) = \max\{|\lambda|,|\lambda^2 \sin(\lambda \theta)/2|,|\lambda^2 \cos(\lambda \theta)/2|\}, \hbox{~~and~~} \\
C(\lambda,\theta) = \max\{|\lambda|,|\lambda^2 \sinh(\lambda \theta)/2|,|\lambda^2 \cosh(\lambda \theta)/2|\}.
\end{array}
\]
Then, for any $X=(x,u,v,w,y,z)\in\bC^N$ such that $D\sG(X)$ is invertible,
\begin{align}
\gamma(\sG,X)\leq \mu(\sG,X)\left(\frac{D^{3/2}}{2\|X\|_1} + \sum_{i=1}^a A(\delta_i,x_{\sigma_i}) \right. &
    + \sum_{j=1}^b B(\epsilon_j,x_{\tau_j}) + \sum_{k=1}^c B(\zeta_k,x_{\phi_k}) \nonumber \\
 & ~~~~~~~  \left. + \sum_{p=1}^e C(\eta_p,x_{\chi_p}) + \sum_{q=1}^h C(\kappa_q,x_{\psi_q})\right). \label{Eq:GammaBoundG}
\end{align}
\end{corollary}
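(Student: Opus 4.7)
The plan is to mirror the proof of Theorem~\ref{Thm:ExpBound}, but to replace the coarse exponential bound from Lemma~\ref{Lemma:ExpBoundGen} with the \emph{exact} formulas for the derivatives of $\exp$, $\sin$, $\cos$, $\sinh$, $\cosh$, and to invoke Lemma~\ref{Lemma:Sup2} in place of Lemma~\ref{Lemma:Sup} at the end.

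First I would introduce $\sM = \left[\begin{array}{cc}\Delta_{(d)}(X)\|P\| & \\ & I\end{array}\right]$ and factor $D\sG(X)^{-1} D^k\sG(X) = (D\sG(X)^{-1}\sM)(\sM^{-1} D^k\sG(X))$, so that $\mu(\sG,X)$ appears out front, exactly as in Theorem~\ref{Thm:ExpBound}. The squared norm $\|\sM^{-1} D^k\sG(X)/k!\|^2$ then splits as a polynomial part (bounded by Proposition~\ref{Prop:BoundK} to yield the contribution $(D^{3/2}/(2\|X\|_1))^{2(k-1)}$) plus a sum of scalar terms $|g^{(k)}(x_*)|^2/(k!)^2$, one for each of the $a+b+c+e+h$ exponential-type coordinates of $\sG$.

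For each scalar term I would use the explicit derivative formulas $(d/dx)^k e^{\delta x} = \delta^k e^{\delta x}$ and $(d/dx)^k \sin(\epsilon x) = \epsilon^k \sin(\epsilon x + k\pi/2)$, together with the analogues for $\cos$, $\sinh$, $\cosh$. In each case $|g^{(k)}(x)| \leq |\lambda|^k M(x)$, where $M(x) = |e^{\delta x}|$ or is the maximum of the relevant pair among $|\sin|,|\cos|$ (resp.\ $|\sinh|,|\cosh|$). The key bookkeeping step is then to show that, for $\Lambda$ equal to the appropriate value of $A(\lambda,\theta)$, $B(\lambda,\theta)$, or $C(\lambda,\theta)$, and for every $k\geq 2$,
\[
|\lambda|^k M(\theta) \;\leq\; 2\,\Lambda^{k-1}.
\]
For $k=2$ this is immediate from the definition (e.g.\ $A(\lambda,\theta)\geq|\lambda^2 e^{\lambda\theta}/2|$), and for $k\geq 3$ it follows from the telescoping estimate $\Lambda^{k-1} = \Lambda\cdot\Lambda^{k-2} \geq (|\lambda|^2 M(\theta)/2)\cdot|\lambda|^{k-2}$ via the other defining inequality $\Lambda\geq|\lambda|$.

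With these bounds, each scalar term satisfies $\bigl(|g^{(k)}|/k!\bigr)^2 \leq (2\Lambda^{k-1}/k!)^2$, which fits precisely the summand shape in Lemma~\ref{Lemma:Sup2} with $\mu_i = 2$ (so the hypothesis $\mu_i\geq 2$ holds with equality) and $\lambda_i = \Lambda_i$. Choosing $\lambda_0 = D^{3/2}/(2\|X\|_1)$ and observing that $\tfrac{1}{2}\mu_i\lambda_i = \lambda_i$, Lemma~\ref{Lemma:Sup2} then outputs exactly the sum on the right-hand side of (\ref{Eq:GammaBoundG}). The principal obstacle is organizing the case analysis across the five families of functions and verifying the uniform inequality $|\lambda|^k M\leq 2\Lambda^{k-1}$; once that is in place the remainder is routine bookkeeping that parallels the proof of Theorem~\ref{Thm:ExpBound} almost verbatim.
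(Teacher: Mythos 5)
Your proposal is correct and follows essentially the same route as the paper: factor out $\mu(\sG,X)$ exactly as in Theorem~\ref{Thm:ExpBound}, replace the coarse bound of Lemma~\ref{Lemma:ExpBoundGen} by the exact derivative formulas for $\exp$, $\sin$, $\cos$, $\sinh$, $\cosh$ (giving $|g^{(k)}|\leq|\lambda|^k M$), and conclude with Lemma~\ref{Lemma:Sup2}. The only (immaterial) difference is the allocation of parameters in Lemma~\ref{Lemma:Sup2}: you take $\mu_i=2$ and $\lambda_i=\Lambda_i$, while the paper takes $\lambda_i=|\lambda|$ and $\mu_i=\max\{2,|\lambda|M\}$; the products $\tfrac12\mu_i\lambda_i$ coincide, so both yield precisely $A$, $B$, and $C$.
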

\begin{proof}
Let $k\geq 2$.  The following table lists the bounds on the higher derivatives together
with associated quantities $\lambda$ and $\mu$ used when applying Lemma~\ref{Lemma:Sup2}.
\[
\begin{array}{c|c|c|c}
g(x) & \hbox{bound for~}|g^{(k)}(x)| & \lambda & \mu \\
\hline \hline
\exp(\theta x) & |\theta^k \exp(\theta x)| & |\theta| & \max\{2,|\theta \exp(\theta x)|\} \\
\hline
\sin(\theta x) & \multirow{2}{*}{$|\theta^k|\max\{|\sin(\theta x)|,|\cos(\theta x)|\}$}
& \multirow{2}{*}{$|\theta|$} & \multirow{2}{*}{$\max\{2,|\theta \sin(\theta x)|,|\theta \cos(\theta x)\}$} \\
\cos(\theta x) & & & \\
\hline
\sinh(\theta x) & \multirow{2}{*}{$|\theta^k|\max\{|\sinh(\theta x)|,|\cosh(\theta x)|\}$}
& \multirow{2}{*}{$|\theta|$} & \multirow{2}{*}{$\max\{2,|\theta \sinh(\theta x)|,|\theta \cosh(\theta x)\}$} \\
\cosh(\theta x) & & & \\
\end{array}
\]
The result now follows immediately by modifying the proof of Theorem~\ref{Thm:ExpBound}
incorporating the bounds presented in this table together with Lemma~\ref{Lemma:Sup2}.
Based on Lemma~\ref{Lemma:Sup2}, the functions $A$, $B$, and $C$ are one-half of the
product of the entries in the $\lambda$ and $\mu$ columns.
\end{proof}

\section{Approximating solutions}\label{Sec:ApproxSolns}

In order to certify that a point is an approximate solution of $\sF = 0$,
where $\sF$ is a polynomial-exponential system, one needs to first have a candidate
point.  In some applications, candidate points arise naturally from the
formulation of the problem.  One systematic approach to yield candidate
points is to replace each analytic function $g_i$ by a polynomial $g_i^p$
and solve the resulting polynomial system, namely
\begin{equation}\label{Eq:PolyTruncate}
\sF^p(x_1,\dots,x_n,y_1,\dots,y_m) = \left[\begin{array}{c}
P(x_1,\dots,x_n,y_1,\dots,y_m) \\
y_1 - g_1^p(x_{\sigma_1}) \\ \vdots \\ y_m - g_m^p(x_{\sigma_m}) \end{array}\right].
\end{equation}
When the degree of the polynomial approximations are sufficiently large,
the numerical solutions of $\sF^p = 0$ are candidates for being approximate
solutions of $\sF = 0$.  In Section~\ref{Sec:Regen}, we discuss using
regeneration \cite{Regen} to solve $\sF^p = 0$.

If a numerical solution of $\sF^p = 0$ is not an approximate solution of $\sF = 0$,
one can try to apply Newton's method for $\sF$ directly to these points
to possibly yield an approximate solution of $\sF = 0$.  Another
approach is to construct a homotopy between $\sF^p$ and $\sF$, and numerically
approximate the endpoint of the path starting with a solution of $\sF^p = 0$.
We note that neither method is guaranteed to yield an approximate solution of $\sF = 0$.

\subsection{Regeneration and polynomial-exponential systems}\label{Sec:Regen}

Regeneration \cite{Regen} solves a polynomial system by using solutions to
related, but easier to solve, polynomial systems.  In particular, we will 
utilize the linear product \cite{LinearProduct} structure of $\sF^p$ 
in (\ref{Eq:PolyTruncate}).  

Suppose that $g$ is a univariate polynomial of degree $d$.  The
polynomial $y - g(x)$ has a linear product structure of 
\[
\langle x,y,1 \rangle \times \underbrace{\langle x,1 \rangle \times \cdots \times
\langle x,1 \rangle}_{d-1 \hbox{\footnotesize ~times}}.
\]
That is, $y - g(x)$ is a finite sum of polynomials
of the~form~$L_1(x,y) \cdots L_d(x,y)$~where 
\[
L_1(x,y) = a y + b_1 x + c_1 \hbox{~~and, for $i = 2,\dots,d$,~~} L_i(x,y) = b_i x + c_i
\]
for some $a,b_i,c_i\in\bC$.  

For $i = 1,\dots,m$, let $r_i = \deg g_i^p$ and $a_i,b_{i,1},\dots,b_{i,r_i}\in\bC$.
Similar to the algorithms proposed in \cite{Regen}, we note that the following arguments 
and proposed algorithm depend on the genericity of $a_i$ and $b_{i,j}$.
Define
\[
  L_{i,1}(x,y) = a_i y + b_{i,1} x + 1 \hbox{~~and, for $j = 2,\dots,r_i$~~} L_{i,j}(x,y) = b_{i,j} x + 1.
\]
Let $\nu = (\nu_1,\dots,\nu_m)$ such that $1 \leq \nu_i \leq r_i$.
Consider the polynomial systems $\sQ_\nu:\bC^{n+m}\rightarrow\bC^{n+m}$
defined by
\begin{equation}\label{Eq:Qext}
\sQ_\nu(x_1,\dots,x_n,y_1,\dots,y_m) = \left[\begin{array}{c} P(x_1,\dots,x_n,y_1,\dots,y_m)
 \\ L_{1,\nu_1}(x_{\sigma_1},y_1) \\ \vdots \\ L_{m,\nu_m}(x_{\sigma_m},y_m) \end{array}\right].
\end{equation}
For $\bone = (1,\dots,1)$, we first compute the solutions of $\sQ_{\bone} = 0$.
We note that in practice, $\sQ_{\bone}$ is solved by working intrinsically on the linear
space defined by $L_{1,\nu_1}(x_{\sigma_1},y_1) = \cdots = L_{m,\nu_m}(x_{\sigma_m},y_m) = 0$.
Numerical approximations of these solutions can be obtained using standard 
numerical solving methods for square polynomial systems (see \cite{SW05,AlgKin})
including, for example, polyhedral homotopies \cite{Polyhedral} 
or basic regeneration \cite{Regen}.

In order to compute the nonsingular isolated solutions of $\sF^p = 0$,
we need to compute the nonsingular isolated solutions of 
$\sQ_\nu = 0$ for all possible $\nu$.
By the theory of coefficient-parameter homotopies \cite{CoeffParam},
the nonsingular isolated solutions of $\sQ_\nu = 0$ can be obtained
by using a homotopy from $\sQ_{\bone}$ to $\sQ_\nu$ starting with
the nonsingular isolated solutions of $\sQ_{\bone} = 0$.  
We note that if $i\neq j$ such that $\sigma_i = \sigma_j$
and $\nu_i,\nu_j > 1$, then $\sQ_\nu = 0$ has no solutions.  

After solving $\sQ_\nu = 0$ for all possible $\nu$, we thus have 
all nonsingular isolated solutions of
\begin{equation}\label{Eq:sP}
\sP(x_1,\dots,x_n,y_1,\dots,y_m) =
\left[ \begin{array}{c} P(x_1,\dots,x_n,y_1,\dots,y_m) \\
\prod_{j=1}^{r_1} L_{1,j}(x_{\sigma_1},y_1) \\ \vdots \\
\prod_{j=1}^{r_m} L_{m,j}(x_{\sigma_m},y_m) \end{array} \right] = 0.
\end{equation}
The final step is to use a homotopy deforming $\sP$ to $\sF^p$
starting with the nonsingular isolated solutions of $\sP = 0$.
The finite endpoints of this homotopy form a superset of the isolated
nonsingular solutions of $\sF^p = 0$.

\section{Implementation details and examples}\label{Sec:Examples}

The certification of polynomial-exponential systems is implemented
in \alphaCertS \cite{alphaCertified}.  The systems must be of the form
$\sG$ in (\ref{Eq:ReductionG}) where the coefficients of $P$
as well as the constant in the argument of $\exp$, $\sin$, $\cos$, $\sinh$, and
$\cosh$ must be rational complex numbers.  
The bound for~$\gamma$ presented in (\ref{Eq:GammaBoundG})
is computed.  Due to the nature of exponential
functions, the computations are performed using arbitrary precision floating
point arithmetic.  Since floating point errors arising from the internal computations
are not fully controlled, the results of \alphaCertS for polynomial-exponential
systems are said to be {\em soft certified}.  See Appendix~\ref{Sec:Appendix}
and \cite{alphaCertifiedPaper,alphaCertified}
for more details regarding input syntax, internal computations, and output.

In the following examples, we used \BertiniS \cite{Bertini} 
and \alphaCertS on a 2.4 GHz Opteron 250 processor 
running 64-bit Linux with $8$ GB of memory.
All files for running these examples can be found at 
\url{www.nd.edu/~jhauenst/PolyExp}.

\subsection{A rigid mechanism}\label{Sec:RRdyad}

Consider the algebraic kinematics problem \cite{AlgKin}
of the inverse kinematics of the RR dyad.  The RR dyad,
which is displayed in Figure~\ref{Fig:RR}, consists of two
legs of fixed length, say $a_1$ and $a_2$, which are connected
by a pin joint.  The mechanism is anchored with a pin joint at
the point $O$, which we take as the origin.  Given a point
$E = (e_1,e_2)$, the problem is compute the angles $\theta_1$
and $\theta_2$ so that the end of the second leg is at $E$.
That is, we want to solve $f(\theta_1,\theta_2) = 0$ where
\[
f(\theta_1,\theta_2) = \left[\begin{array}{c} a_1 \cos(\theta_1) + a_2 \cos(\theta_2) - e_1 \\
a_1 \sin(\theta_1) + a_2 \sin(\theta_2) - e_2 \end{array}\right].
\]
The polynomial-exponential system $\sG:\bC^6\rightarrow\bC^6$ of the form (\ref{Eq:ReductionG}) is
\begin{equation}\label{Eq:SystemG}
\sG(\theta_1,\theta_2,y_1,y_2,y_3,y_4) = \left[\begin{array}{c}
a_1 y_3 + a_2 y_4 - e_1 \\
a_1 y_1 + a_2 y_2 - e_2 \\
y_1 - \sin(\theta_1) \\
y_2 - \sin(\theta_2) \\
y_3 - \cos(\theta_1) \\
y_4 - \cos(\theta_2) \end{array}\right].
\end{equation}

\begin{figure}
  \begin{center}
     \begin{picture}(30,30)(0,0)
       \put(-20,-20){\includegraphics{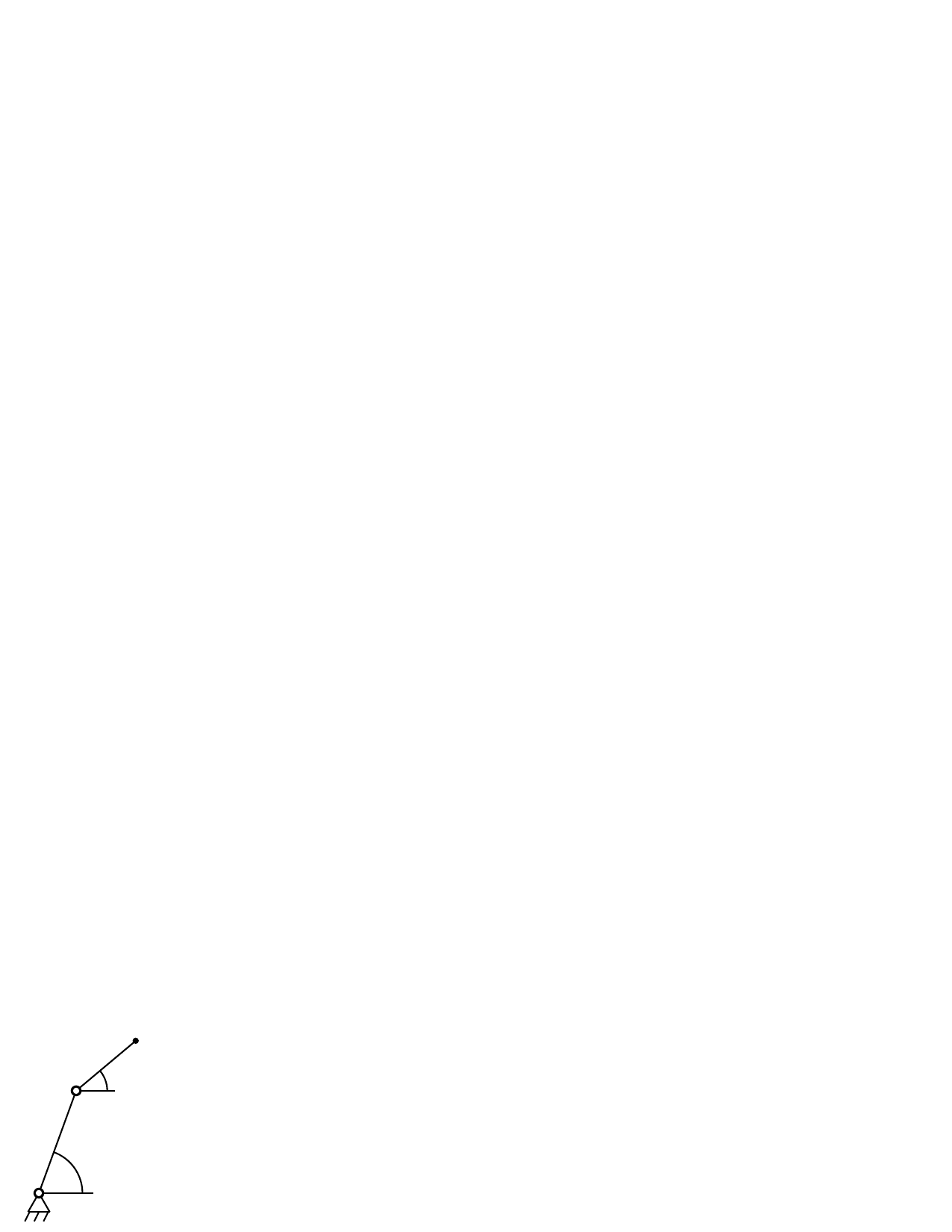}}
       \put(-9,3){$O$}
       \put(5,38){$a_1$}
       \put(29,27){$\theta_1$}
       \put(52,78){$\theta_2$}
       \put(41,92){$a_2$}
       \put(71,100){$E$}
     \end{picture}
  \end{center}
  \caption{RR dyad}\label{Fig:RR}
\end{figure}

Since $\theta_i$ only appears in $f$ as arguments of the sine and cosine functions,
we can compute solutions of $f = 0$ by using the solutions of
a related polynomial system.  In particular, consider the polynomial system
$g:\bC^4\rightarrow\bC^4$ obtained by replacing $\sin(\theta_i)$ and $\cos(\theta_i)$
with $s_i$ and $c_i$, respectively, and adding the Pythagorean identities, namely
\[
g(s_1,s_2,c_1,c_2) = \left[\begin{array}{c} a_1 c_1 + a_2 c_2 - e_1 \\
a_1 s_1 + a_2 s_2 - e_2 \\ s_1^2 + c_1^2 - 1 \\ s_2^2 + c_2^2 - 1 \end{array}\right].
\]
Given a solution of $g = 0$, solutions of $f = 0$ are generated using
either the $\arcsin$ or $\arccos$ functions.  Moreover,
it is easy to verify that, for general $a_i,e_i\in\bC$, $g = 0$ has
two solutions and thus $f = 0$ has two $2\pi$-periodic families of solutions.

Consider the inverse kinematics problem with $a_1 = 3$, $a_2 = 2$,
and $E = (1, 3.5)$.  We used \BertiniS to numerically approximate
the two solutions of $g = 0$.  For demonstration, consider the two digit rational
approximations of the solutions
\[
X_1 = \frac{1}{100}(65,77,76,-64) \hbox{~~and~~} X_2 =\frac{1}{100}(95,32,-30,95).
\]
The certified upper bounds for $\alpha(g,X_i)$ computed by \alphaCertS
using exact rational arithmetic and rounded to four digits
are $0.0736$ and $0.0788$, respectively.  Hence,
$X_1$ and $X_2$ are both approximate solutions of $g = 0$.
Furthermore, \alphaCertS certified that the associated solutions
are distinct and real.

We now consider two corresponding approximations to solutions of $\sG = 0$ namely
\begin{equation}\label{Eq:Z}
Z_1 = (0.711,2.261,0.65,0.77,0.76,-0.64) \hbox{~and~} Z_2 = (1.874,0.324,0.95,0.32,-0.30,0.95).
\end{equation}
The upper bounds for $\alpha(\sG,Z_i)$ computed by \alphaCertS
using 96-bit floating point arithmetic and rounded to four digits
are $0.1265$ and $0.1355$, respectively.  In order to reduce the effect of
roundoff errors, we also used 1024-bit floating point arithmetic
and obtained the same four digit value.  Hence, \alphaCertS has
soft certified that $Y_1$ and $Y_2$ are both approximate
solutions of $\sG = 0$.
Furthermore, \alphaCertS has soft certified that the associated solutions
are distinct and real.
Table~\ref{Tab:RRdyad} lists the Newton residuals
computed by \alphaCertS using 4096-bit precision
which demonstrates the quadratic convergence of Newton's method.

\begin{table}
\centering
\begin{tabular}{|c|c|c|}
\hline
$k$ & $\beta(\sG,N_\sG^k(Z_1))$ & $\beta(\sG,N_\sG^k(Z_2))$ \\
\hline
0 & $4.94\cdot 10^{-3}$ & $5.26\cdot 10^{-3}$ \\
\hline
1 & $7.46\cdot 10^{-9}$ & $6.29\cdot 10^{-9}$ \\
\hline
2 & $1.21\cdot 10^{-17}$ & $8.86\cdot 10^{-18}$ \\
\hline
3 & $3.65\cdot 10^{-35}$ & $2.01\cdot 10^{-35}$ \\
\hline
4 & $3.56\cdot 10^{-70}$ & $1.10\cdot 10^{-70}$ \\
\hline
5 & $3.56\cdot 10^{-140}$ & $3.41\cdot 10^{-141}$ \\
\hline
6 & $3.50\cdot 10^{-280}$ & $3.21\cdot 10^{-282}$ \\
\hline
7 & $3.44\cdot 10^{-560}$ & $2.90\cdot 10^{-564}$ \\
\hline
\end{tabular}
\caption{Newton residuals for $\sG$}\label{Tab:RRdyad}
\end{table}

By using Euler's formula, we could alternatively use the 
polynomial-exponential system
$\sG':\bC^6\rightarrow\bC^6$ of the form (\ref{Eq:ReductionG}) where
\[
\sG'(\theta_1,\theta_2,x_1,x_2,y_1,y_2) = \left[\begin{array}{c}
a_1 x_1 + a_2 x_2 - e_1 + ie_2 \\
a_1 y_1 + a_2 y_2 - e_1 - ie_2\\
x_1 y_1 - 1 \\
x_2 y_2 - 1 \\
y_1 - \exp(i\theta_1) \\
y_2 - \exp(i\theta_2) \end{array}\right]
\]
and $i = \sqrt{-1}$.  Consider the two points
\[
\begin{array}{l}
W_1 = (0.711,2.261,0.758-0.653i,-0.637-0.771i,0.758+0.653i,-0.637+0.771i) \hbox{~~and} \\
W_2 = (1.874,0.324,-0.299-0.954i,0.948-0.318i,-0.299+0.954i,0.948+0.318i).
\end{array}
\]
The upper bounds for $\alpha(\sG',W_i)$ computed by \alphaCertS
using both 96-bit and 1024-bit floating point arithmetic and 
rounded to four digits are $0.1492$ and $0.1422$, respectively.  
In particular, \alphaCertS soft certified that $W_1$ and $W_2$ are 
both approximate solutions of $\sG' = 0$ with distinct associated solutions.  

Finally, consider the polynomial system obtained by replacing the sine and cosine functions in $f$
with a third and second degree truncated Taylor series approximation, respectively, centered at the origin, namely  
\[
f^p(\theta_1,\theta_2) = \left[\begin{array}{c} a_1 (1 + \theta_1^2/2) + a_2 (1 + \theta_2^2/2) - e_1 \\
a_1 (\theta_1 + \theta_1^3/6) + a_2 (\theta_2 + \theta_2^3/6) - e_2 \end{array}\right].
\]
The system of equations $f^p = 0$ has six solutions and yield six solutions of $f = 0$
upon deforming $f^p$ to $f$.  These six solutions split into two groups of three based on
the values of $\sin(\theta_i)$ and $\cos(\theta_i)$ corresponding to 
the two families of solutions of $f = 0$.

\subsection{A compliant mechanism}\label{Sec:4bar}

In \cite{CompliantMech}, Su and McCarthy study a 
polynomial-exponential system modeling a compliant 
four-bar linkage displayed in \cite[Fig.~4]{CompliantMech}.  
Upon solving a related polynomial system and applying 
Newton's method, they conclude based on the numerical results 
that a specific compliant four-bar linkage 
has two stable configurations.  We will first 
use \alphaCertS to certify that their numerical approximations 
of the two stable configurations are indeed approximate solutions.
Afterwards, we will use the approaches of 
Section~\ref{Sec:ApproxSolns} to recompute these two 
stable configurations.

The polynomial-exponential system $f:\bC^5\rightarrow\bC^5$ modeling a
compliant four-bar linkage is 
\[
f(\alpha, \theta_1, \theta_2, \nu_1, \nu_2) = 
\left[\begin{array}{c}
R(\alpha)(W_2 - W_1) + G_1 + r_1 cs(\theta_1) - G_2 - r_2 cs(\theta_2) \\
R(\alpha)(W_2 - W_1)\nu_1 + r_1 cs(\theta_1) - r_2 cs(\theta_2)\nu_2  \\
k_1 (\alpha - \alpha^0 - \theta_1 + \theta_1^0) (\nu_1 - 1) 
    + k_2 (\alpha - \alpha^0 - \theta_2 + \theta_2^0) (\nu_1 - \nu_2)
\end{array}\right]
\]
where 
\[
R(\alpha) = \left[\begin{array}{cc} \cos(\alpha) & -\sin(\alpha) \\ \sin(\alpha) & \cos(\alpha) \end{array}\right]
\hbox{~~and~~}
cs(\theta) = \left[\begin{array}{c} \cos(\theta) \\ \sin(\theta) \end{array}\right].
\]
We note that each of the first two lines in $f$ consists of two functions.
Additionally, $f$ is not algebraic since $X$, $\sin(X)$, and $\cos(X)$ all appear in $f$
when $X$ is either $\alpha$, $\theta_1$, or $\theta_2$.  

The values for the specific linkage under consider are
\[
W_1 = \left[\begin{array}{c} -112.632 \\ -45.053 \end{array}\right],
W_2 = \left[\begin{array}{c} 112.632 \\ -45.053 \end{array}\right],
G_1 = \left[\begin{array}{c} 0 \\ 0 \end{array}\right],
G_2 = \left[\begin{array}{c} 100 \\ 0 \end{array}\right],
r_1 = r_2 = 250,
\]
\[
k_1 = 29250, k_2 = 5824.29, \theta_1^0 = 1.4486, \theta_2^0 = 0.925, \hbox{~and~} \alpha^0 = -0.2169.
\]
with numerical approximations for the stable configurations
\[
\begin{array}{l}
 A_1 = (-0.216933, 1.448567, 0.924966, 0.610174, 1.094669) \hbox{~~and~~} \\
 A_2 = (-1.516473, 0.131930, -0.875993, 1.570656, 1.668379). \end{array}
\]

The polynomial-exponential system $\sG:\bC^{11}\rightarrow\bC^{11}$ 
of the form (\ref{Eq:ReductionG}) is
\[
{\small
\sG(\alpha, \theta_1, \theta_2, \nu_1, \nu_2, y_1, \dots, y_6) =
\left[\begin{array}{c}
R(y_1,y_2)(W_2 - W_1) + G_1 + r_1 cs(y_3,y_4) - G_2 - r_2 cs(y_5,y_6) \\
R(y_1,y_2)(W_2 - W_1)\nu_1 + r_1 cs(y_3,y_4) - r_2 cs(y_5,y_6)\nu_2  \\
k_1 (\alpha - \alpha^0 - \theta_1 + \theta_1^0) (\nu_1 - 1) 
    + k_2 (\alpha - \alpha^0 - \theta_2 + \theta_2^0) (\nu_1 - \nu_2) \\
y_1 - \sin(\alpha) \\
y_2 - \cos(\alpha) \\
y_3 - \sin(\theta_1) \\
y_4 - \cos(\theta_1) \\
y_5 - \sin(\theta_2) \\
y_6 - \cos(\theta_2) \end{array}\right]
}
\]
where 
\[
R(y_1,y_2) = \left[\begin{array}{cc} y_2 & -y_1 \\ y_1 & y_2 \end{array}\right]
\hbox{~~and~~}
cs(w,z) = \left[\begin{array}{c} z \\ w \end{array}\right].
\]
Let $B_i = (A_i,Y_i)$ where 
\[
\begin{array}{l}
 Y_1 = (-0.215236, 0.976562, 0.992539, 0.121925, 0.798600, 0.601862) \hbox{~~and~~} \\
 Y_2 = (-0.998525, 0.0542970, 0.131547, 0.991310, -0.768180, 0.640235). \end{array}
\]
The upper bounds for $\alpha(\sG,B_i)$ computed by \alphaCertS
using both 96-bit and 1024-bit floating point arithmetic and rounded to four digits
are $0.0166$ and $0.0427$, respectively.  In particular, \alphaCertS has soft 
certified that $B_1$ and $B_2$ are both approximate
solutions of $\sG = 0$.  Furthermore, \alphaCertS has soft certified 
that the associated solutions are distinct and real.

The formulation of the polynomial-exponential system can have an adverse effect
on certifying solutions.  For example, consider the polynomial-exponential system
$\sG':\bC^{11}\rightarrow\bC^{11}$ obtained by replacing the 
$7^{th}$, $9^{th}$, and $11^{th}$ functions of $\sG$ with
\[
y_1^2 + y_2^2 - 1, \ y_3^2 + y_4^2 - 1, \hbox{~and~}\ y_5^2 + y_6^2 - 1.
\]
Clearly, every solution of $\sG = 0$ must also be a solution of $\sG' = 0$.  
Table~\ref{Tab:Compliant} compares the bounds for $\alpha$ and $\gamma$
and the value of $\beta$ for $\sG$ and $\sG'$ at $B_1$ and $B_2$ computed by \alphaCert.
This table shows that the bounds computed for $\alpha(\sG',B_i)$ and $\gamma(\sG',B_i)$ 
are three orders of magnitude larger than the bounds computed for $\alpha(\sG,B_i)$ and $\gamma(\sG,B_i)$.
In particular, due to the larger bounds, \alphaCertS is unable to certify that 
$B_1$ and $B_2$ are approximate solutions of $\sG' = 0$.  If we replace $B_i$ with $N_{\sG'}(B_i)$,
then \alphaCertS is able to soft certify that the resulting points are approximate solutions of $\sG' = 0$
using both 96-bit and 1024-bit precision.

\begin{table}
\centering
\begin{tabular}{|c|c|c|c|c|c|c|}
\hline
  & \multicolumn{2}{c|}{bound for} & \multicolumn{2}{c|}{approximation of} & \multicolumn{2}{c|}{bound for} \\
\cline{2-7}
$F$ & $\alpha(F,B_1)$ & $\alpha(F,B_2)$ & $\beta(F,B_1)$ & $\beta(F,B_2)$ & $\gamma(F,B_1)$, & $\gamma(F,B_2)$ \\
\hline
$\sG$ & $1.66\cdot 10^{-2}$ & $4.27\cdot 10^{-2}$ & $8.08\cdot10^{-7}$ 
                      & $1.06\cdot10^{-6}$ & $2.05\cdot10^4$ & $4.02\cdot10^4$ \\
\hline
$\sG'$ & $11.9$ & $42.5$ & $8.08\cdot10^{-7}$ & $1.06\cdot10^{-6}$ & $1.47\cdot10^7$ & $4.00\cdot10^7$ \\
\hline
\end{tabular}
\caption{Values obtained for $\sG$ and $\sG'$ at $B_1$ and $B_2$}\label{Tab:Compliant}
\end{table}

We now consider solving a polynomial system obtained
by replacing the sine and cosine functions with a fifth and fourth degree truncated 
Taylor series approximation, respectively, centered at the origin.  
Let the polynomial system $P:\bC^{11}\rightarrow\bC^5$ consists of the first 
five functions in $\sG$.  In particular, $P$ consists of 
two linear and three quadratic polynomials
and thus has total degree of the polynomial $\sQ_\nu$ 
defined in (\ref{Eq:Qext}) has total degree $2^3 = 8$.  

Since we are using fifth and fourth degree polynomial approximations
for the sine and cosine functions, respectively, we have $r_i = 5$ if $i$ is odd and $r_i = 4$ if $i$ is even.
We picked random $a_i,b_{i,j}\in\bC$ for $i = 1,\dots,6$ and $j = 1,\dots,r_i$
and used \BertiniS to solve each $\sQ_{\nu} = 0$.  
In total, this produced numerical approximations to $356$ nonsingular 
isolated solutions of $\sP = 0$ where~$\sP$ is defined in (\ref{Eq:sP}).

The tracking of the $356$ paths from $\sP$ to the polynomial approximation, $\sG^p$, 
of $\sG$ produced $120$ points which became the start points for the homotopy 
deforming $\sG^p$ to $\sG$.  This homotopy yielded $93$ numerical approximations to 
solutions of $\sG = 0$.  By using both 96-bit and 1024-bit floating point arithmetic, 
\alphaCertS soft certified that each of these $93$ 
points are indeed approximate solutions with distinct associated solutions
Moreover, this computation soft certified that $65$ have real associated solutions,
two of which are the two stable configurations computed in \cite{CompliantMech}.

\section{Conclusion}\label{Sec:Conclusion}

One key to certification using $\alpha$-theory
is the ability to compute a bound on $\gamma$, which is defined
in terms of all higher order derivatives.  For polynomial systems,
where there are only finitely many nonzero derivatives,
Shub and Smale developed the bound presented in Proposition~\ref{Prop:GammaBound}.
This bound is based on first order derivatives, coefficients and degrees of the polynomials, 
and the point of interest.  
Theorem~\ref{Thm:ExpBound} extends this bound to polynomial-exponential systems
and is implemented in {\tt alphaCertified}.

The computationally expensive part of computing the bound 
on $\gamma$ is the linear algebra computations required to compute~$\mu$ 
as defined in \eqref{Eq:MuExp}.  Thus, the restriction on the size of the 
systems for which the bound could be computed arises from the 
linear algebra algorithms used.  
Even though large systems could be investigated, the fact that
this produces an upper bound of~$\gamma$ means that $\beta$ will need
to be smaller in order to certify an approximate solution.  
Therefore, the use of this bound may induce additional computational 
cost via higher precision.  

Since the certification approach presented for polynomial-exponential systems
is based on the quadratic convergence of Newton's method and $\alpha$-theory,
we limit our focus to certifying nonsingular solutions to square systems.
Even though Newton's method near singular solutions can have a variety of behavior, 
e.g., see \cite{GO}, one can attempt the certification method~at~any~point.

\pdfbookmark[0]{References}{thebibliography}

\appendix

\pdfbookmark[0]{Appendix}{Sec:Appendix}

\section{Using {\tt alphaCertified}}\label{Sec:Appendix}

As a demonstration of using {\tt alphaCertified}, we consider the polynomial-exponential
system~$\sG$ in~\eqref{Eq:SystemG} where
$a_1 = 3$, $a_2 = 2$, and $E = (1,3.5)$ along with the points $Z_i$ in \eqref{Eq:Z}.

\subsection{Input}

We describe the three required files: input system, points, and configurations.  

\subsubsection*{Input system}

In order to describe the system $\sG$, which is of the required
form \eqref{Eq:ReductionG}, we first list the 
total number of variables, $6$, and the number of polynomials, $2$.
With this setup, {\tt alphaCertified} assumes that 
the last four variables will be defined in terms of the first two variables, 
which are described after the polynomials.  
Since the system is assumed to be exact, the real and imaginary parts of
all numbers listed in this file must be rational.

Each polynomial is represented as a sum of monomials.  Thus, 
we list the total number of monomials in the polynomial (both have
$3$ terms) followed by a description of each monomial.  A monomial is described
by the entries of the exponent vector followed by the real and imaginary parts of the coefficient.

The relations for the last four polynomials are described by the variable number
for which the analytic function depends upon, a string indicating which
analytic function (``X'' for $\exp()$, ``S'' for $\sin()$, ``C'' for $\cos()$, ``SH'' for $\sinh()$,
and ``CH'' for $\cosh()$), and the real and imaginary parts of the corresponding constant.

Figure~\ref{Fig:InputSystem} lists the contents of this file, which we name {\tt inputSystem}, 
along with comments.

\begin{figure}[!h]
\centering
$$\begin{array}{ll}
\mbox{\tt 6 2} & \mbox{number of variables and number of polynomials} \\ \\
\mbox{\tt 3} & \mbox{number of terms in first polynomial: $3y_3 + 2y_4 - 1$} \\
\mbox{\tt 0 0 0 0 1 0 3 0} & \mbox{$3y_3$} \\
\mbox{\tt 0 0 0 0 0 1 2 0} & \mbox{$2y_4$} \\
\mbox{\tt 0 0 0 0 0 0 -1 0} & \mbox{$-1$} \\
\\ 
\mbox{\tt 3} & \mbox{number of terms in second polynomial: $3y_1 + 2y_2 - 7/2$} \\
\mbox{\tt 0 0 1 0 0 0 3 0} & \mbox{$3y_1$} \\
\mbox{\tt 0 0 0 1 0 0 2 0} & \mbox{$2y_2$} \\
\mbox{\tt 0 0 0 0 0 0 -7/2 0} & \mbox{$-7/2$} \\
\\
\mbox{\tt 1 S 1 0} & \mbox{$y_1 - \sin(\theta_1)$} \\
\mbox{\tt 2 S 1 0} & \mbox{$y_2 - \sin(\theta_2)$} \\
\mbox{\tt 1 C 1 0} & \mbox{$y_3 - \cos(\theta_1)$} \\
\mbox{\tt 2 C 1 0} & \mbox{$y_4 - \cos(\theta_2)$}
\end{array}$$ 
\caption{{\tt inputSystem} and a line-by-line description of the file}\label{Fig:InputSystem}
\end{figure}

\subsubsection*{Points}

Since the number of variables was described in the input system, 
we only need to list the number of points, $2$,
followed by  floating-point representation of the 
real and imaginary parts of the coordinates for each of the points $Z_1$ and $Z_2$.

Figure~\ref{Fig:Points} lists the contents of this file, which we name {\tt points}.

\begin{figure}[!h]
\centering
$$\begin{array}{l}
\mbox{\tt 2} \\
\\
\mbox{\tt 0.711 0} \\
\mbox{\tt 2.261 0} \\
\mbox{\tt 0.65 0} \\
\mbox{\tt 0.77 0} \\
\mbox{\tt 0.76 0} \\
\mbox{\tt -0.64 0} \\
\\
\mbox{\tt 1.874 0} \\
\mbox{\tt 0.324 0} \\
\mbox{\tt 0.95 0} \\
\mbox{\tt 0.32 0} \\
\mbox{\tt -0.30 0} \\
\mbox{\tt 0.95 0}
\end{array}$$
\caption{{\tt points}}\label{Fig:Points}
\end{figure}

\subsubsection*{Configurations}

The last file indicates the settings and algorithms for {\tt alphaCertified} to run.  
For polynomial-exponential systems, we need to utilize floating-point arithmetic
and can set the precision.  

Figure~\ref{Fig:Config} lists the contents of this file, which we name {\tt config},
which instructs {\tt alphaCertified} to use $1024$-bit floating-point arithmetic 
while executing the default certification procedures in {\tt alphaCertified}.
We refer the reader to \cite{alphaCertified} for more details on settings
and algorithms.

\begin{figure}[!h]
\centering
$$\begin{array}{l}
\mbox{\tt ARITHMETICTYPE: 1;} \\
\mbox{\tt PRECISION: 1024;}
\end{array}$$
\caption{{\tt config}}\label{Fig:Config}
\end{figure}

\subsection{Execution}\label{Sec:Execution}

For simplicity, we follow Linux syntax and assume that
a binary file for {\tt alphaCertified} along with the three files
constructed above are in the same folder.  With this setup, we execute
$$\gg \mbox{\tt ./alphaCertified inputSystem points config}$$

\subsection{Output}\label{Sec:Output}

The output of {\tt alphaCertified} is contained in a summary 
of the results printed to the screen, contained in
Figure~\ref{Fig:ScreenOutput}, and several files.  
Figure~\ref{Fig:Summary} contains the portions of the
human readable file {\tt summary} created during the execution of {\tt alphaCertified}.
These values are also printed in the machine readable file
{\tt constantValues}.

\begin{figure}[!h]
\centering
\begin{verbatim}
  alphaCertified v1.3.0 (October 16, 2013)
  Jonathan D. Hauenstein and Frank Sottile
       GMP v4.3.2 & MPFR v3.1.2

Please note that all coefficients must be complex rational numbers.

alphaCertified is using the polynomial-exponential certification algorithms.

Analyzing 2 points using 1024-bit floating point arithmetic.

Isolating 2 approximate solutions.

Classifying 2 distinct approximate solutions.

Floating point (1024 bits) soft certification results:

Number of points tested:                2
Certified approximate solutions:        2
Certified distinct solutions:           2
Certified real distinct solutions:      2
\end{verbatim}
\caption{Summary printed to the screen}\label{Fig:ScreenOutput}
\end{figure}

\begin{figure}[!h]
\centering
$$\begin{array}{lcl}
\mbox{\tt alpha < 1.265465288439055e-1} & & \mbox{\tt alpha < 1.355028294876322e-1} \\
\mbox{\tt beta \mytilde= 4.938677034638513e-3} & & \mbox{\tt beta \mytilde= 5.257805074083256e-3} \\
\mbox{\tt gamma < 2.562356840836994e1} & & \mbox{\tt gamma < 2.577174839659842e1}
 \end{array}$$
\caption{Portions of {\tt summary} corresponding to $Z_1$ and $Z_2$, respectively}\label{Fig:Summary}
\end{figure}

\end{document}